\def\dom{\mathop{\mathrm{Dom}}\nolimits}
\def\im{\mathop{\mathrm{Im}}\nolimits} 
\def\ker{\mathop{\mathrm{Ker}}\nolimits}
\def\rank{\mathop{\mathrm{rank}}\nolimits}
\def\T{\mathcal{T}}
\def\PT{\mathcal{PT}\!}
\def\S{\mathcal{S}}
\def\sdp{\mathop{\!\rtimes\!}\nolimits}
\def\N{\mathbb N}
\newcommand{\R}{\mathcal R}
\newcommand{\G}{\mathcal G}
\newcommand{\la}[2]{{\kern1pt}^{#1}{\kern-.5pt}{#2}}
\renewcommand{\leq}{\leqslant}
\renewcommand{\geq}{\geqslant}
\newcommand{\comp}[2]{#1^{(#2)}}
\newcommand{\proj}[1]{\overline{#1}}
\newcommand{\zt}{\varnothing}
\newcommand{\act}[2]{{\kern1pt}^#1{\kern-.5pt}#2}
\newtheorem{theorem}{Theorem}[section]
\newtheorem{corollary}[theorem]{Corollary}
\newtheorem{lemma}[theorem]{Lemma}
\newenvironment{proof*}[1]{\begin{trivlist}\item[\hskip%
\labelsep{\bf #1.}]}%
{\qed\rm\end{trivlist}}
\begin{document}

\title{Partial transformation  monoids preserving a uniform partition}

\author{Serena Cical\`o, V\'itor H. Fernandes and Csaba Schneider}

\address[Cical\`o]
{Universit\`a degli Studi di Cagliari, 
Dipartimento di Matematica e Informatica, 
Via Ospedale, 72 - 09124 Cagliari, 
Italy; 
e-mail: cicalo@science.unitn.it}

\address[Fernandes]
{Departamento de Matem\'atica, 
Faculdade de Ci\^encias e Tecnologia, 
Universidade Nova de Lisboa, 
Monte da Caparica, 
2829-516 Caparica, 
Portugal; 
also: 
Centro de \'Algebra da Universidade de Lisboa, 
Av. Prof. Gama Pinto 2, 
1649-003 Lisboa, 
Portugal; 
e-mail: vhf@fct.unl.pt}

\address[Schneider]
{Centro de \'Algebra da Universidade de Lisboa, 
Av. Prof. Gama Pinto 2, 
1649-003 Lisboa, 
Portugal; 
e-mail: csaba.schneider@gmail.com}

\begin{abstract} 
The objective of this paper is to study  the monoid
of all partial transformations of a finite set that preserve a uniform
partition. In addition to proving that 
this monoid is a quotient of 
a wreath product with respect to a congruence relation,
we show that it is generated by 5 generators, we compute its
order and determine a presentation on a minimal generating set.
\end{abstract}

\subjclass[2000]{20M20, 20M10, 20M05, 05A18.} 

\keywords{\small\it Keywords: \rm Transformation monoids, uniform partitions, ranks, presentations.}

\maketitle

\section{Introduction}\label{intro}

The main objective of this paper is a study
of 
the monoid of all partial transformations of a finite set 
that preserve a uniform 
equivalence relation. 
Given a set $\Omega$, a {partial 
transformation} on $\Omega$ is a map $\alpha:\Gamma\rightarrow \Omega$ 
where $\Gamma\subseteq \Omega$ allowing the case
that $\Gamma=\emptyset$.  
The set of partial transformations of $\Omega$ is a monoid under the 
operation of composition. Given an equivalence relation $E$ on $\Omega$, the 
set of partial transformations that preserve $E$ is closed
under composition, and hence it is a submonoid. In this paper
we will investigate  monoids of partial transformations
preserving a uniform equivalence relation $E$; that is,
an equivalence 
relation with the property that the equivalence classes have the same size. 
For $n\geq 1$, the symbols $\PT_n$ and $\T_n$ denote the monoid of
all partial transformations and the monoid of all transformations
of the set $\underline n=\{1,\ldots,n\}$. The wreath product of two
monoids $T$ and $S$ is denoted by $T\wr S$ (see Section~\ref{notations} for the 
definition). The rank of a monoid is the 
cardinality of a least-order generating set.


The main results of this paper are collected in the following theorem.

\begin{theorem}\label{main}
Let $\Omega$ be a finite set, let $E$ be a 
uniform equivalence
relation on $\Omega$ with $m$ equivalence classes each of which has size $n$,
with $n, m\geq 2$, 
and let $\PT_E$ be the monoid of all partial transformations
on $\Omega$ that preserve $E$. Then 
\begin{enumerate}
\item[(i)] $|\PT_E|=(m(n+1)^n-m+1)^m$;
\item[(ii)] $\rank \PT_E=5$;
\item [(iii)] $\PT_E\cong (\PT_n\wr\T_m)/R$ where $R$ is a congruence relation
on $\PT_n\wr\T_m$ generated by a single pair of elements.
\end{enumerate}
\end{theorem}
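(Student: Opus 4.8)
Here is the route I would take. I write $\Omega=\underline{n}\times\underline{m}$, so that the classes of $E$ are the blocks $B_i=\underline{n}\times\{i\}$, and I use throughout the natural action of $\PT_n\wr\T_m$ on $\Omega$: a pair $(f;t)$, with $f\colon\underline{m}\to\PT_n$ and $t\in\T_m$, sends $(x,i)$ to $(xf(i),\,it)$ whenever $x\in\dom f(i)$. This action is by $E$-preserving partial transformations, so it yields a homomorphism $\theta\colon\PT_n\wr\T_m\to\PT_E$; it is surjective, since each $\alpha\in\PT_E$ maps $B_i\cap\dom\alpha$, when nonempty, into a single block, and is therefore recovered from the induced block-maps together with an arbitrary target for each block disjoint from $\dom\alpha$. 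One checks at once that $\theta(f;t)=\theta(g;s)$ exactly when $f=g$ and $it=is$ for all $i$ with $f(i)\neq\emptyset$; writing $R$ for this congruence gives $\PT_E\cong(\PT_n\wr\T_m)/R$, which already proves the first assertion of (iii).

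For (i) I would count $|\PT_E|$ directly from this: an element is determined block by block, independently, by the restrictions $\alpha|_{B_i}$, and the number of admissible restrictions on a fixed block is $m(n+1)^n-(m-1)$ — there are $m$ choices of target block and $|\PT_n|=(n+1)^n$ induced maps, the empty restriction being counted $m$ times — whence $|\PT_E|=\bigl(m(n+1)^n-m+1\bigr)^m$.

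For the remaining claim of (iii), that $R$ is generated by one pair, I would take $\xi=(f_0;\mathrm{id})$ and $\eta=(f_0;c)$, where $f_0(1)=\emptyset$, $f_0(i)=\mathrm{id}$ for $i\ge 2$, and $c\in\T_m$ fixes $2,\dots,m$ and maps $1$ to $2$; let $R'$ be the congruence they generate, so $R'\subseteq R$. A direct computation of $x\xi y$ and $x\eta y$ for arbitrary $x,y\in\PT_n\wr\T_m$ shows these two elements always have the same first component — the factor $f_0(1)=\emptyset$ absorbs the only coordinate where $c$ differs from the identity — so $R'$-related elements share their first component, and it remains to prove $(f;t)\mathrel{R'}(f;s)$ whenever $t$ and $s$ agree on $\underline m\setminus N$, where $N=\{i:f(i)=\emptyset\}$. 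I would do this by a case analysis on $|N|$: the case $N=\emptyset$ is vacuous; the case $|N|\ge 2$ follows because a single $R'$-move can change $t$ at one point of $N$ to a value $t$ attains elsewhere and, once $t|_N$ is constant, can replace it by any other constant, so both $(f;t)$ and $(f;s)$ are driven, one coordinate of $N$ at a time, to a common form that agrees with $t=s$ off $N$ and is constant on $N$; the case $|N|=1$ is handled in at most two such moves. Each move is realised by a suitable $x=(a;u)$, $y=(b;v)$ — typically with $u$ a permutation of $\underline m$ and $v=u^{-1}t$ — and making these moves fit together uniformly is the main chore in this part.

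For (ii) the upper bound is the exhibition of five generators: two units generating the unit group $S_n\wr S_m$ of $\PT_E$ (which is $2$-generated for $n,m\ge 2$), one element bijective on every block but mapping two blocks onto one, one $E$-preserving transformation equal to the identity off one block and of rank $n-1$ on that block, and the restriction of $\mathrm{id}_\Omega$ to $\Omega$ minus a single point. For the lower bound I would combine three closure facts — in a finite monoid every factor of a unit is a unit, the total partial transformations form a submonoid, and the injective partial transformations form a submonoid — with a description of Green's $\mathcal J$-relation on $\PT_E$ near the top. The key structural claim is that exactly two $\mathcal J$-classes are covered by the class of units: $\mathcal J_1$, the elements acting as a unit on all blocks but one and of rank $n-1$ on the exceptional block, with induced block-map a permutation; and $\mathcal J_2$, the elements acting as a unit on every block but with induced block-map of rank $m-1$. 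Moreover $\mathcal J_1$ and $\mathcal J_2$ are incomparable, and $\mathcal J_1$ is the disjoint union of a part of total, non-injective elements and a part of non-total, injective ones. Granting this, if $\langle X\rangle=\PT_E$ then: the units of $\langle X\rangle$ are generated by those of $X$, so $X$ contains at least $\rank(S_n\wr S_m)=2$ units; every element of $\mathcal J_2$, being a product of generators whose other factors can only be units, forces a factor in $\mathcal J_2$, so $X$ meets $\mathcal J_2$; every element of the total non-injective part of $\mathcal J_1$ is a product of generators lying in units or $\mathcal J_1$, and units together with the injective part of $\mathcal J_1$ generate only injective elements, so $X$ meets the total part of $\mathcal J_1$; dually $X$ meets the injective part of $\mathcal J_1$. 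These are five pairwise distinct elements, so $|X|\ge 5$. Establishing the $\mathcal J$-order of $\PT_E$ near the top — which does not follow formally from the corresponding picture for $\PT_n\wr\T_m$, since $R$ merges some lower $\mathcal J$-classes — is the technical heart of the whole theorem.
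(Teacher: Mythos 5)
Your treatment of (i), of both halves of (iii), and of the upper bound in (ii) is essentially the paper's: the same surjection $\PT_n\wr\T_m\to\PT_E$ with the same kernel description, the same single generating pair $(\zt\comp\varepsilon 1,(\zt\comp\varepsilon 1)([2,2,3,\ldots,m]\proj\varepsilon))$ for the congruence, and the same five generators (your block-by-block count for (i) is a little more direct than the paper's count of canonical forms, but amounts to the same computation). The gap is in the lower bound of (ii), precisely at the structural claim you yourself flag as the technical heart. It is not true that $\mathcal{J}_1$ and $\mathcal{J}_2$ are incomparable, nor that $\mathcal{J}_2$ is covered by the class of units. Let $x$ be the transformation with every block component the identity and induced block map $[2,2,3,\ldots,m]$, so $x\in\mathcal{J}_2$, and let $\alpha$ be the full transformation with identity block map, identity on blocks $2,\ldots,m$, and $[2,2,3,\ldots,n]$ on block $1$, so $\alpha\in\mathcal{J}_1$. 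Then $x\alpha=x$, because $x$ sends every block away from the deficient block of $\alpha$; hence $x\leq_{\mathcal{J}}\alpha$ and $\mathcal{J}_2<_{\mathcal{J}}\mathcal{J}_1$ strictly. Consequently your inference that an element of $\mathcal{J}_2$ ``is a product of generators whose other factors can only be units'' and therefore forces a generator in $\mathcal{J}_2$ fails: such a factorisation may involve generators from $\mathcal{J}_1$. (The conclusion is also too strong as stated: a fifth generator need not act as a unit on every block; it only needs a non-bijective induced block map.)

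The step can be repaired, and this is what the paper does. The assignment $\beta\mapsto\proj\beta$ is a homomorphism $\PT_n\wr\T_m\to\T_m$, and one applies it to the (unique) preimages of a target element $\beta$ with $\rank\proj\beta=m-1$ and of the hypothesised generators. Every unit and every element of rank $nm-1$ has $\proj\beta\in\S_m$, since an element whose block map has rank at most $m-1$ has rank at most $(m-1)n<nm-1$ when $n\geq 2$. Hence no product of two units, one full and one strictly partial element of rank $nm-1$ can equal an element whose block map has rank $m-1$, and a fifth generator is forced without having to locate it in a particular $\mathcal{J}$-class. Your arguments for the full and the strictly partial rank-$(nm-1)$ generators are sound as they stand (products of full maps are full, products of injective partial maps are injective) and coincide with the paper's.
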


Parts~(i) and~(iii) of Theorem~\ref{main} are proved in Section~\ref{wreathsec}, while part~(ii)
is verified in Section~\ref{sectrank}.
In addition to proving the assertions in Theorem~\ref{main}, we determine an
explicit generating set for $\PT_E$ with 5~generators  (Corollary~\ref{5gen})
and a
presentation for $\PT_E$ satisfied by 
this generating set (Corollary~\ref{relcor}).

The rank of a semigroup 
is an important invariant and the determination
of the ranks of  semigroups and monoids has been a focus of research in
semigroup theory. 
Let $\Omega$ be a finite set
with at least 2~elements. 
It is well-known that the full symmetric group  of $\Omega$
is generated by
2~generators. (In the language of semigroup theory
one might say that the rank of the symmetric group is 2, but, 
as the rank has a different meaning in permutation group theory, we
avoid expressing this result this way.) Further, the monoid of all transformations 
and the monoid of all partial transformations of $\Omega$ have
ranks $3$ and $4$,  respectively; see \cite[Theorems~3.1.3 and~3.1.5]{GO}. 
The survey~\cite{Fernandes:2002} collects 
these results and similar ones for other classes of transformation monoids, 
in particular, for monoids of order-preserving transformations and 
for some of their extensions. 

Let  $\T_E$ denote 
the stabilizer of 
a non-trivial uniform equivalence relation $E$ in the monoid 
of transformations of a finite set 
$\Omega$.
Huisheng \cite{Huisheng:2005a} proved  that
$\T_E$ is generated by its group of invertible transformations and two 
additional elements. In other words, the relative rank of $\T_E$ with 
respect to its group of units is~2. The group of units of $\T_E$ is 
isomorphic to a
wreath product of two full symmetric groups and Huisheng observed that such
a group
can be  generated by  4 elements.
This led him to conclude that
$\rank\T_E\leq 6$.  Later,  
Ara\'ujo and the third author \cite{Araujo&Schneider:2009} improved this result 
by showing that the wreath product of two symmetric groups can always be
generated by two elements,
and hence $\rank\T_E=4$. 
Some natural submonoids of $\T_E$, 
such as those of the 
order-preserving or orientation-preserving transformations, 
have  already been considered in 
\cite{Fernandes&Quinteiro:2010b,Huisheng&Dingyu:2005,Huisheng.al:2007}, and 
their ranks were determined by the second author and Quinteiro 
\cite{Fernandes&Quinteiro:2010a,Fernandes&Quinteiro:2011sub}.


The wreath product construction in the class of monoids is one
of the main tools of this paper. The full stabilizer in the symmetric group
of a uniform equivalence relation  is a wreath product of two 
smaller symmetric groups and the analogous result holds 
in the full transformation monoid
(see~\cite[Theorem~2.1]{Araujo&Schneider:2009}). One
would expect that the monoid $\PT_E$ of all partial transformations preserving 
a uniform equivalence relation
$E$ is a wreath product of two  partial transformation monoids, but
this is not the case. Nevertheless, $\PT_E$ can still be described in terms
of wreath products. More precisely, $\PT_E$ is isomorphic to a quotient 
of the wreath product $\PT_n \wr \T_m$ where $n$ is the size of 
an equivalence class and $m$ is the number of classes of $E$.
This quotient is taken modulo a well-described
congruence relation $R$ (see Lemma~\ref{l21} for 
details). As claimed by Theorem~\ref{main}(iii), the 
congruence $R$ is generated by a single pair of elements.

The last part of the paper is concerned with determining a presentation
of the monoid $\PT_E$ defined in the previous paragraph. 
Determining presentations of important semigroups,  monoids, and groups 
has been a mainstream research topic since the end of the 19-th century.
For instance, a presentation for the symmetric group $\S_n$
was given by
Moore \cite{Moore:1897} in 1897.
For the full transformation monoid $\T_n$, a presentation  
was given by A\u{\i}zen\v{s}tat \cite{Aizenstat:1958} in 1958 and, 
some years later, in 1961, Popova \cite{Popova:1961} 
established a presentation for the partial transformation monoid $\PT_n$. 
Over the past decades, several authors determined presentations for many 
other classes of monoids; see \cite{Ruskuc:1995} and the 
survey~\cite{Fernandes:2002} on presentations of monoids 
related to order-preserving transformations. 
Finally
we must mention the recent work of East and his collaborators 
\cite{East:2006,EasdownEastFitzGerald:2008,East:2010a,
East:2010b,East:2011a,East:2011b}.

The paper is organized as follows. 
We set our notation concerning transformations, partial transformations
and wreath products in Section \ref{notations}. 
We establish the relation between $\PT_E$ and wreath products
and prove parts~(i) and~(iii) of Theorem~\ref{main} in Section~\ref{wreathsec}.
Then in Sections~\ref{sectrank}  
we verify Theorem~\ref{main}(ii).
Finally in Section~\ref{pressect} we determine a 
presentation of $\PT_E$ on a minimal generating set.

\section{Notation concerning partial transformations and wreath
products}\label{notations}

In this section we establish the notation concerning partial transformations
and wreath products that will be used throughout the paper.

Let $\Omega$ be a finite set. A {\em partial transformation} of $\Omega$
is a map $\alpha:\Gamma\rightarrow\Omega$ where $\Gamma\subseteq\Omega$.
The set $\Gamma$ is the {\em domain} of
$\alpha$ and is denoted by $\dom\alpha$, while
the {\em image} of $\alpha$ is denoted by
$\im\alpha$. 
If $\Gamma=\Omega$ then $\alpha$ is said to be a {\em transformation}. 
Sometimes we want to emphasize that a partial transformation
{\em is} or {\em is not} 
a transformation and in such cases we write 
{\em full transformation} and {\em strictly partial transformation}, respectively.
We allow the case that $\Gamma=\emptyset$ and the corresponding
partial transformation is denoted by $\zt$. 
In this paper, transformations and
partial transformations act on the right unless it
is explicitly stated otherwise. That is, if $\alpha$ is a partial
transformation on $\Omega$ and $i\in\Omega$, then the image  
of $i$ under $\alpha$ is denoted by $i\alpha$. 
The set
of all partial transformations of $\Omega$ 
is a monoid under composition and is denoted
by $\PT(\Omega)$. Given an equivalence relation $E$ on $\Omega$ 
we say that
a partial transformation $\alpha$ {\em preserves} $E$ if for
all $i, j\in\dom \alpha$ such that
$(i,j)\in E$ we have that $(i\alpha,j\alpha)\in E$. 
The equivalence relation
$E$ is said to be {\em uniform} if its equivalence classes have 
the same size. The equivalence relations $\Omega\times\Omega$ and 
$\{(i,i)\mid i\in\Omega\}$ are preserved by $\PT(\Omega)$, 
and these relations are said to be {\em trivial}. 
Hence an equivalence relation is non-trivial if and only if it has 
at least two classes and at least one class has size 2.
For a natural number $n$, let $\underline n$ denote the set $\{1,\ldots,n\}$. 
The symbols $\PT_n$, $\T_n$, and $\S_n$ denote the monoid of
partial transformations, the monoid of transformations, and the group of 
permutations, respectively, acting
on $\underline n$.

We  represent a transformation $\alpha\in\T_n$ by the list 
$[1\alpha,\ldots,n\alpha]$ of images. 
If $\alpha$ is a partial transformation and
$i\not\in\dom\alpha$ then
we put $\zt$ in the place of $i\alpha$. A permutation is usually 
written as a product of cycles. The identity map in $\T_n$ will be 
written as $1$. 

Let us now  review the concept of wreath products of monoids; 
see~\cite[Section~1.2.2]{strh} for details.
Let $m\in\N$,  let $T$ be a submonoid of $\T_m$, and let 
$S$ be any monoid. 
The \textit{wreath product} of $S$ by the transformation monoid $T$, denoted by $S\wr T$, is the semidirect product $S^m\sdp T$ with 
respect to the left action of 
$T$ on the $m$-fold direct power $S^m$ defined as
\begin{equation}\label{action}
{\kern1pt}^\tau{\kern-.5pt}(s_1,s_2,\ldots,s_m)=(s_{1\tau},s_{2\tau},\ldots,
s_{m\tau}), 
\end{equation}
for all $\tau\in T$ and $s_1,s_2,\ldots,s_m\in S$. 
Therefore, the wreath product $S\wr T$ is the monoid with underlying set 
$S^m\times T$ and multiplication defined by
\begin{multline}\label{product}
(s_1,s_2,\ldots,s_m;\sigma)(t_1,t_2,\ldots,t_m;\tau)= 
((s_1,s_2,\ldots,s_m){\kern1pt}^\sigma{\kern-.5pt}(t_1,t_2,\ldots,t_m);\sigma\tau)= 
\\(s_1t_{1\sigma},s_2t_{2\sigma},\ldots,s_mt_{m\sigma};\sigma\tau),
\end{multline}
for all $(s_1,s_2,\ldots,s_m;\sigma),(t_1,t_2,\ldots,t_m;\tau)\in S^m\times T$. 

An element $\beta$ of a wreath product $S\wr T$ 
can be written uniquely in the form $(\comp \beta 1,\ldots,
\comp \beta m;\proj \beta)$ where 
$\comp \beta 1,\ldots,\comp \beta m\in S$ and $\proj \beta\in T$. 
In the rest 
of the paper, for an element $\beta\in S\wr T$, these components of $\beta$
will be denoted by $\comp \beta 1,\ldots,\comp \beta m,\proj \beta$.

\section{Wreath products and  partial endomorphisms of a 
uniform partition}\label{wreathsec}

Let $m, n\in\N$ and set $\Omega=\underline n\times\underline m$. 
Let $E$ denote the equivalence relation on $\Omega$ that is defined
by the rule that $((i,j),(k,l))\in E$ if and only if $j=l$. 
Then $E$ has $m$ equivalence classes each of which has size $n$, and hence $E$
is uniform.
Let $\PT_{n\times m}$, $\T_{n\times m}$, and $\S_{n\times m}$ denote the monoid
of  all partial transformations, the monoid of all 
transformations, and the group
of all permutations on $\Omega$ 
preserving the equivalence relation $E$. 
An 
isomorphism between $\T_{n\times m}$ and the wreath product 
$\T_n\wr\T_m$ can easily be constructed as follows
(see~\cite[Lemma~2.1]{Araujo&Schneider:2009}).  Let 
$\alpha\in \T_{n\times m}$. The transformation
$\alpha$ induces a transformation $\proj \alpha$ 
on the set of  equivalence classes of $E$.
Since these  equivalence classes 
are indexed by the elements of $\underline m$, the transformation
$\overline\alpha$ can be viewed as an element of $\T_m$. 
Further, for each $j\in\underline m$,
we can define a transformation $\comp\alpha j$ on $\underline n$ as follows:
$i\comp \alpha j=k$ if $(i,j)\alpha=(k,j\overline\alpha)$.
It is proved in \cite[Lemma 2.1]{Araujo&Schneider:2009} that the map
$\alpha\mapsto (\comp\alpha 1,\ldots,\comp \alpha m;\proj\alpha)$ is 
an isomorphism between the monoids $\T_{n\times m}$ and $\T_n\wr\T_m$.



One would think that the monoids $\PT_{n\times m}$ may be isomorphic to 
a wreath product constructed from  $\PT_n$ and $\T_m$, 
but unfortunately this is not the case.
Nonetheless, the monoid $\PT_{n\times m}$ can be described as a 
quotient of the wreath product $\PT_n\wr\T_m$ as follows. 
We define a homomorphism $\varphi$ 
from $\PT_n\wr\T_m$ to $\PT_{n\times m}$ by defining
a ``partial action'' of $\PT_n\wr\T_m$ on 
$\Omega=\underline n\times\underline m$. Let  
$\alpha\in  \PT_n\wr\T_m$. We define $\alpha\varphi\in\PT_{n\times m}$ 
as follows. Let 
\begin{equation}\label{domain}
\dom\alpha\varphi=\{(i,j)\mid i\in\dom \comp \alpha j\}
\end{equation}
and for $(i,j)\in\dom\alpha\varphi$ we set $(i,j)(\alpha\varphi)=
(i\comp\alpha j,j\overline\alpha)$. 
It is routine calculation to check that $\varphi:\PT_n\wr\T_m\rightarrow
\PT_{n\times m}$ is a homomorphism.

For $j\in\underline m$, let 
$\comp\varepsilon j$ denote the homomorphism 
$\PT_n\rightarrow \PT_n\wr\T_m$ defined by the rule
 $\alpha\mapsto(1,\ldots,1,\alpha,1,\ldots,1;1)$
where the non-trivial factor of the image appears in the $j$-th position. 
Further, let $\proj\varepsilon$ denote the homomorphism $\T_m\rightarrow \PT_n\wr\T_m$ 
mapping $\alpha\mapsto(1,\ldots,1;\alpha)$. The homomorphisms
$\comp\varepsilon j$ and
$\proj\varepsilon$ are clearly injective.

\begin{lemma}\label{l21}
(i) The homomorphism $\varphi$ defined above is surjective. 

(ii) For $\alpha,\beta \in\PT_n\wr\T_m$, we have that  
$\alpha\varphi=\beta\varphi$ if and only if
$\comp \alpha j=\comp \beta j$ 
for all $j\in\underline m$ and $j\proj\alpha=j\proj\beta$ whenever 
$\comp \alpha j\neq \zt$.

(iii) Let $\tau$ denote the element $[2,2,3,\ldots,m]\proj\varepsilon$ of
$\PT_n\wr\T_m$. 
Then the congruence relation $\ker\varphi$ is generated by  the pair
$(\zt\comp\varepsilon 1,(\zt\comp\varepsilon 1)\tau)$.
\end{lemma}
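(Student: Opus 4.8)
The plan is to show that the congruence $\rho$ generated by the given pair equals $\ker\varphi$, where I write $g_1=\zt\comp\varepsilon 1=(\zt,1,\ldots,1;1)$ and $g_2=(\zt\comp\varepsilon 1)\tau$; a one-line computation with~\eqref{product} gives $g_2=(\zt,1,\ldots,1;c)$ with $c=[2,2,3,\ldots,m]$, the transformation sending $1$ to $2$ and fixing the rest. The inclusion $\rho\subseteq\ker\varphi$ is immediate from part~(ii): $g_1$ and $g_2$ have the same components and their tops differ only at the point $1$, where the common component is $\zt$, so $g_1\varphi=g_2\varphi$, and hence $\rho\subseteq\ker\varphi$.

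For the reverse inclusion I would take $(\alpha,\beta)\in\ker\varphi$, use part~(ii) to write $\alpha=(s_1,\ldots,s_m;\sigma)$ and $\beta=(s_1,\ldots,s_m;\sigma')$ with the same components $s=(s_1,\ldots,s_m)$ and with $j\sigma=j\sigma'$ for all $j$ outside $D:=\{j\mid s_j=\zt\}$, and then prove $(s;\sigma)\mathrel\rho(s;\sigma')$ by a sequence of elementary ``moves''. The key computation is that $ag_1b$ and $ag_2b$, for $a=(a_1,\ldots,a_m;\xi)$ and $b=(b_1,\ldots,b_m;\eta)$, always have the same components ($\zt$ on $\xi^{-1}(1)$ and $a_jb_{j\xi}$ elsewhere), while their tops agree off $\xi^{-1}(1)$ and on $\xi^{-1}(1)$ are the constant maps with values $1\eta$ and $2\eta$. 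Choosing $\xi$ with $\xi^{-1}(1)$ equal to a prescribed nonempty $J\subseteq D$ and with its fibres refining those of a prescribed $\mu$ that is constant on $J$, and setting $b_l=1$ for $l\neq1$ and $a_j=s_j$ for $j\notin J$, realizes $(s;\mu)$ as $ag_1b$ (here one uses $J\subseteq D$ to match the $\zt$-components); then $ag_2b=(s;\mu')$, where $\mu'=\mu$ off $J$ and $\mu'$ is constant $2\eta$ on $J$. Thus the available move is: replace the constant value of $\mu$ on a set $\emptyset\neq J\subseteq D$ by a new value $w$, where $w$ is arbitrary if $|\mu(\underline m\setminus J)|\leq m-2$ (one then also arranges $2\notin\im\xi$, making $2\eta$ free), while if $|\mu(\underline m\setminus J)|=m-1$ --- forcing $|J|=1$ and $\mu$ injective off $J$ --- one is restricted to $w\in\mu(\underline m\setminus J)$.

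Finally I would chain moves according to the size of $D$. If $D=\emptyset$ there is nothing to do. If $D=\{k\}$, then $\sigma$ and $\sigma'$ differ only at $k$; a single move with $J=\{k\}$ either converts $\sigma$ into $\sigma'$ directly, or we are in the constrained case and one of the two moves ``$k\sigma\mapsto k\sigma'$ from $(s;\sigma)$'' or ``$k\sigma'\mapsto k\sigma$ from $(s;\sigma')$'' is admissible (the common restriction of $\sigma$ and $\sigma'$ to $\underline m\setminus\{k\}$ is injective and misses a single value, which cannot equal both $k\sigma$ and $k\sigma'$), and $\rho$ is symmetric. If $|D|\geq2$, I would first collapse: writing $D$ as the union of the nonempty fibres $D_1,\ldots,D_r$ of $\sigma|_D$ with values $v_1,\ldots,v_r$, apply the move with $J=D_2,\ldots,D_r$ and target $v_1$ in turn --- each admissible since $D_1$ remains untouched, so $v_1\in\mu(\underline m\setminus D_j)$ --- reaching $(s;\sigma_c)$ with $\sigma_c\equiv v_1$ on $D$ and $\sigma_c=\sigma$ off $D$; do the same to $\sigma'$, reaching $(s;\sigma_c')$ with $\sigma_c'$ constant on $D$ and agreeing with $\sigma$ off $D$; then one last move with $J=D$, admissible because $|\mu(\underline m\setminus D)|\leq m-|D|\leq m-2$, matches the two constant values and gives $(s;\sigma_c)\mathrel\rho(s;\sigma_c')$. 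Combining, $(s;\sigma)\mathrel\rho(s;\sigma')$, so $\ker\varphi\subseteq\rho$ and $\ker\varphi=\rho$. I expect the main obstacle to be exactly the constrained case $|\mu(\underline m\setminus J)|=m-1$ of the move, where the target value is not free; the point of splitting on $|D|$ is to route around it, since for $|D|\geq2$ the collapsing step guarantees that the decisive final move is of the unconstrained kind, and for $|D|=1$ symmetry of $\rho$ saves the day.
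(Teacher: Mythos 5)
Your proposal only proves part~(iii), taking (i) and (ii) as given; that is defensible since both are short direct verifications (and the paper's own proofs of them are routine), but a complete write-up would still need them. For part~(iii) your argument is correct and takes a genuinely different route from the paper's. The paper first manufactures, from the single pair $(\zt\comp\varepsilon 1,(\zt\comp\varepsilon 1)\tau)$, the whole family of pairs $(\zt\comp\varepsilon i,(\zt\comp\varepsilon i)(\tau_{i,s}\proj\varepsilon))$ by conjugating with permutations in the top coordinate, and then runs an induction on the number of positions where $\proj\alpha$ and $\proj\beta$ disagree, with a case split according to whether the needed target value lies in $\im\proj\alpha$ or $\im\proj\beta$; the residual case in which every component is $\zt$ is settled by invoking the classification of congruences of $\T_m$ (\cite[Theorem~6.3.10]{GO}) to see that the induced congruence on the embedded copy of $\T_m$ is universal. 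You instead compute the general sandwich $ag_1b$ versus $ag_2b$ once and for all, which yields a single, more powerful ``block move'' rewriting the top map on an arbitrary nonempty subset $J$ of the $\zt$-locus, subject only to the constraint you correctly isolate when $\mu$ is injective off $J$ (forcing $|J|=1$); your collapse-then-match chaining for $|D|\geq 2$ and the symmetry trick for $|D|=1$ do route around that constraint, and I checked that the boundary cases $m=2$ and $D=\underline m$ go through, so the appeal to the congruence lattice of $\T_m$ and the induction are both avoided. What your approach buys is self-containedness and a shorter chain of elementary transitions; what it costs is the somewhat delicate admissibility analysis of the move, where the paper's longer induction proceeds by smaller, more local deductions.
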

\begin{proof}
(i) 
Let $\alpha\in\PT_{n\times m}$. 
We  define $\comp\alpha 1,\ldots,\comp\alpha m\in\PT_n$ and 
$\proj\alpha\in
\T_m$ similarly 
as in the argument that $\T_n\wr\T_m\cong \T_{n\times m}$ before the lemma. 
The only difference is that, for some $j$, 
the restriction of $\alpha$ to the equivalence class
$\{(i,j)\mid i\in \underline n\}$ may be equal to $\zt$. In this case
we let $j\proj\alpha=1$.
Then
it is clear that $(\comp\alpha 1,\ldots,\comp \alpha m;\proj\alpha)\varphi=
\alpha$, and hence $\varphi$ is surjective.

(ii) 
Let us first assume  that $\alpha,\beta\in\PT_n\wr\T_m$ such that 
$\alpha\varphi=\beta\varphi$. 
If $\comp\alpha j\neq\comp \beta j$, for some $j$,  
then there is $i\in\underline n$ such that
$i\comp \alpha j\neq i\comp \beta j$ 
(including the possibility that either $i\not\in\dom\alpha$ 
or $i\not\in\dom\beta$). Then $(i,j)(\alpha\varphi)\neq(i,j)(\beta\varphi)$, 
which is a contradiction. 
Similarly, if $j\in\underline m$ is such that $\comp\alpha j\neq \zt$, but 
$j\proj\alpha\neq j\proj\beta$, then we have, for $i\in\dom\comp\alpha j$, 
that
$(i,j)(\alpha\varphi)\neq(i,j)(\beta\varphi)$.
Therefore we obtain a contradiction again. Thus one direction of assertion~(ii)
is valid.

To show the other direction, 
let us now suppose  that $\alpha, \beta\in\PT_n\wr\T_m$ such that
 $\comp \alpha j=\comp \beta j$ 
for all $j\in\underline m$ and $j\proj\alpha=j\proj\beta$ whenever 
$\comp \alpha j\neq \zt$. Equation~\eqref{domain} implies that
$\dom(\alpha\varphi)=\dom(\beta\varphi)$.  Let $(i,j)\in\dom(\alpha\varphi)$. 
Then $\comp\alpha j\neq \zt$ and $\comp\beta j\neq \zt$ and so
$$
(i,j)(\alpha\varphi)=(i\comp \alpha j,j\proj\alpha)=(i\comp\beta j,j\proj\beta)=(i,j)(\beta\varphi).
$$
Hence $\alpha\varphi=\beta\varphi$ as required.

(iii) Let $Q$ be the smallest congruence relation on 
the monoid $\PT_n\wr\T_m$ 
such that
$(\zt\comp\varepsilon 1,(\zt\comp\varepsilon 1)\tau)\in Q$.
By the description of $\ker\varphi$ in statement~(ii), we have that
$(\zt\comp\varepsilon 1,(\zt\comp\varepsilon 1)\tau)\in \ker\varphi$ 
and hence $Q\subseteq \ker\varphi$. It remains to show
that $\ker\varphi\subseteq Q$.

For $i, s\in\underline m$, 
let $\tau_{i,s}$ denote the transformation $[1,2,\ldots,i-1,s,
i+1,i+2,\ldots,m]$. We claim that
\begin{equation}\label{qeq}
(\zt\comp\varepsilon i,(\zt\comp\varepsilon i)(\tau_{i,s}\proj\varepsilon))\in Q\quad
\mbox{for all}\quad i, s\in\underline m.
\end{equation}
First we verify this claim in the case when $i=1$.  Let $s\in\{2,\ldots,m\}$. 
As
$$
((2\,s)\proj\varepsilon) (\zt\comp\varepsilon 1)((2\,s)\proj\varepsilon)= \zt\comp\varepsilon 1
$$
while
$$
((2\,s)\proj\varepsilon) (\zt\comp\varepsilon 1)\tau((2\,s)\proj\varepsilon)= 
(\zt\comp\varepsilon 1)(\tau_{1,s}\proj\varepsilon),
$$
we find that 
$(\zt\comp\varepsilon 1,(\zt\comp\varepsilon 1)(\tau_{1,s}\proj\varepsilon))\in Q$.
Now conjugating
this pair with $(1\,i)\proj\varepsilon$ gives~\eqref{qeq}. 

Suppose that $\alpha, \beta\in \PT_n\wr\T_m$ such that 
$(\alpha,\beta)\in \ker\varphi$. We are required to show that $(\alpha,\beta)\in Q$. 
We have, for all $j\in\underline m$, that 
$\comp\alpha j=\comp\beta j$ and that $j\overline\alpha=j\overline\beta$ whenever
$\comp\alpha j\neq\zt$. 
Let $I=\{j\mid j\in\underline m\mbox{ with } j\overline\alpha\neq j\overline\beta\}$. 
We show by induction
on $|I|$ that $(\alpha,\beta)\in Q$. 
If $I=\emptyset$, then 
$\alpha=\beta$, and in this case the claim is clearly valid. 

Suppose that $I\neq\emptyset$ and 
that $(\alpha_1,\beta_1)\in \ker\varphi$ implies that $(\alpha_1,\beta_1)\in Q$ 
whenever $|\{j\mid j\overline\alpha_1\neq j\overline\beta_1\}|\leq |I|-1$. 
Let $j_0$ be the minimal element of $I$. After possibly replacing
$\alpha$ and $\beta$ with $((1\,j_0)\proj\varepsilon)\alpha$
and $((1\,j_0)\proj\varepsilon)\beta$, we may assume that $j_0=1$. 
Thus we  have that $1\overline\alpha=r$ and $1\overline\beta=s$
with $r\neq s$ and that 
$\comp\alpha 1=\comp\beta 1=\zt$. 
Suppose first that $s\in\im\proj\alpha$; that is, 
$s=u\overline\alpha$ with some $u\in\underline m$. Then
$$
(\zt\comp\varepsilon 1)\alpha=\alpha
$$
while
$$
(\zt\comp\varepsilon 1)(\tau_{1,u}\proj\varepsilon)\alpha=
(\comp\alpha 1,\comp\alpha 2,\ldots,\comp\alpha m;\overline\alpha')
$$
where $1\overline\alpha'=s$ and $j\overline\alpha'=j\overline\alpha$ 
for all $j\in\{2,\ldots,m\}$. 
Hence
 $(\alpha, (\comp\alpha 1,\comp\alpha 2,\ldots,\comp\alpha m;\overline\alpha'))\in Q$. 
Since $|\{j\mid j\overline\alpha'\neq j\overline\beta\}|\leq |I|-1$, 
we obtain from the induction hypothesis that 
$(\beta,(\comp\alpha 1,\comp\alpha 2,\ldots,\comp\alpha m;\overline\alpha'))\in Q$. 
Thus $(\alpha,\beta)\in Q$ as claimed.

If $s\not\in\im\overline\alpha$, but $r\in\im\overline\beta$ then, interchanging
the role of $\alpha$ and $\beta$,  the same
argument shows that $(\alpha,\beta)\in Q$.

Suppose now  that $s\not\in\im\overline\alpha$ and $r\not\in\im\overline\beta$. 
If there is some $j\in\underline m$ such that $\comp\alpha j\neq \zt$ then
$j\overline\alpha=j\overline\beta$. Set $u=j\overline\alpha$ and let 
$\overline\alpha'=[u,2\overline\alpha,3\overline\alpha,\ldots,m\overline\alpha]$
and $\overline\beta'=
[u,2\overline\beta,3\overline\beta,\ldots,m\overline\beta]$.
By the argument above, we have that
$(\alpha,(\comp\alpha 1,\ldots,\comp\alpha m;\overline\alpha'))\in Q$
and $(\beta,(\comp\beta 1,\ldots,\comp\beta m;\overline\beta'))\in Q$.
By the induction hypothesis, $((\comp\alpha 1,\ldots,\comp\alpha m;\overline\alpha'),(\comp\beta 1,\ldots,\comp\beta m;\overline\beta'))\in Q$
which implies that $(\alpha,\beta)\in Q$ in this case.

Finally we prove the claim when $\comp\alpha j=\zt$ for all $j\in
\underline m$. 
In this case we are required to show that
$((\zt,\ldots,\zt;\overline\alpha),(\zt,\ldots,\zt;\overline\beta))\in Q$ 
for all $\overline\alpha, \overline\beta\in\T_m$.
The elements $\alpha$ with the property that $\comp\alpha j=\zt$ for 
all $j\in\underline m$ form a submonoid $T$ in $\PT_n\wr \T_m$ that is
isomorphic to $\T_m$ via the isomorphism $\alpha\mapsto\proj\alpha$. 
The relation $Q_T=Q\cap (T\times T)$ is a congruence relation on $T$. 
Equation~\eqref{qeq} implies that
$$
\left(\prod_{j\in\underline m}\zt\comp\varepsilon j,\prod_{j\in\underline m}(\zt\comp
\varepsilon j)({\tau_{j,1}}\proj\varepsilon)\right)\in 
Q_T. 
$$
Now $\prod_{j\in\underline m}\zt \comp\varepsilon j=(\zt,\ldots,\zt;1)$ while
$\prod_{j\in\underline m}(\zt\comp\varepsilon j)({\tau_{j,1}}\proj\varepsilon)=
(\zt,\ldots,\zt;[1,\ldots,1])$.
Therefore 
$((\zt,\ldots,\zt;1),(\zt,\ldots,\zt;[1,\ldots,1]))\in Q_T$. 
Now~\cite[Theorem~6.3.10]{GO} gives  that $Q_T$ must be the universal 
congruence which shows that 
$((\zt,\ldots,\zt;\overline\alpha),(\zt,\ldots,\zt;\overline\beta))\in Q_T$ 
for all $\overline\alpha, \overline\beta\in\T_m$.
Therefore 
$((\zt,\ldots,\zt;\overline\alpha),(\zt,\ldots,\zt;\overline\beta))\in Q$
for all $\overline\alpha, \overline\beta\in\T_m$ as required.
\end{proof}

Part~(iii) of Lemma~\ref{l21} implies Theorem~\ref{main}(iii).

Next we determine the order of $\PT_{n\times m}$ by counting the 
equivalence classes of the relation $\ker\varphi$ 
defined in Lemma~\ref{l21}. Let
$\alpha\in\PT_n\wr\T_m$. We say that
$\alpha$ is in {\em canonical form} if 
$\comp\alpha j=\zt$ implies that $j\proj\alpha=1$ for all
$j\in\underline m$. 

\begin{lemma}\label{ordlemma}
Every equivalence class of $\ker\varphi$ contains precisely
one element in canonical form. Consequently, 
$$
|\PT_{n\times m}|=(m(n+1)^n-m+1)^m.
$$
\end{lemma}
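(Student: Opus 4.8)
The plan is to prove the two assertions separately: first the bijection between $\ker\varphi$-classes and canonical form elements, then a count of the latter. For existence of a canonical representative, I would take an arbitrary $\alpha\in\PT_n\wr\T_m$ and build $\beta$ by keeping $\comp\beta j=\comp\alpha j$ for every $j\in\underline m$ and setting $j\proj\beta=j\proj\alpha$ when $\comp\alpha j\neq\zt$ and $j\proj\beta=1$ otherwise. Since a value in $\underline m$ has been prescribed at every $j$, the map $\proj\beta$ is a genuine element of $\T_m$, so $\beta\in\PT_n\wr\T_m$; it is in canonical form by construction; and Lemma~\ref{l21}(ii) gives at once that $(\alpha,\beta)\in\ker\varphi$. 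Hence every $\ker\varphi$-class contains at least one element in canonical form.

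For uniqueness, suppose $\alpha$ and $\beta$ are both in canonical form with $\alpha\varphi=\beta\varphi$. Lemma~\ref{l21}(ii) yields $\comp\alpha j=\comp\beta j$ for all $j$, and $j\proj\alpha=j\proj\beta$ for those $j$ with $\comp\alpha j\neq\zt$. For the remaining indices, where $\comp\alpha j=\comp\beta j=\zt$, the canonical form condition forces $j\proj\alpha=1=j\proj\beta$. Thus $\proj\alpha=\proj\beta$, and so $\alpha=\beta$. This establishes that each $\ker\varphi$-class contains exactly one canonical form element.

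It remains to count these elements. By Lemma~\ref{l21}(i) the homomorphism $\varphi$ is onto, so $\PT_{n\times m}\cong(\PT_n\wr\T_m)/\ker\varphi$ and $|\PT_{n\times m}|$ equals the number of $\ker\varphi$-classes, which by the first part is the number of canonical form elements. Now an element of $\PT_n\wr\T_m$ is precisely the data of the $m$ pairs $(\comp\alpha j,j\proj\alpha)\in\PT_n\times\underline m$, one for each $j\in\underline m$, these choices being mutually independent because $\proj\alpha$ ranges over all of $\T_m$; and being in canonical form is a constraint imposed separately on each coordinate $j$, namely that the pair is $(\zt,1)$ whenever its first entry is $\zt$. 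Using $|\PT_n|=(n+1)^n$, the number of admissible pairs in a single coordinate is $1+((n+1)^n-1)m=m(n+1)^n-m+1$: either $\comp\alpha j=\zt$, which forces $j\proj\alpha=1$ and contributes the single summand $1$, or $\comp\alpha j$ is one of the $(n+1)^n-1$ nonempty partial transformations and $j\proj\alpha$ is arbitrary in $\underline m$. Since the coordinates are independent, $|\PT_{n\times m}|=(m(n+1)^n-m+1)^m$.

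I do not anticipate a real obstacle here. The only things to handle with care are checking that the element $\beta$ produced in the existence step is indeed a full transformation on its second component (so that it lies in $\PT_n\wr\T_m$), and, in the counting step, the bookkeeping that separates the coordinates where the component equals $\zt$ from those where it does not, together with the elementary fact that $|\PT_n|=(n+1)^n$, since each of the $n$ points either maps to one of the $n$ points or lies outside the domain.
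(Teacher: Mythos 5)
Your proof is correct and follows essentially the same route as the paper: the same construction of the canonical representative, the same appeal to Lemma~\ref{l21}(ii) for uniqueness, and the same count of canonical forms. The only cosmetic difference is that you count coordinate-by-coordinate as a direct $m$-fold product of $m(n+1)^n-m+1$ choices, whereas the paper organizes the same count as a binomial sum $\sum_{r=0}^{m}\binom{m}{r}(m(n+1)^n-m)^r$ over the number $r$ of non-empty components; these are the same computation.
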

\begin{proof}
Let $\alpha\in\PT_n\wr\T_m$. Then $\alpha=
(\comp\alpha 1,\ldots,\comp\alpha m;\proj\alpha)$ 
where $\comp\alpha 1,\ldots,\comp\alpha m
\in\PT_n$ and $\proj\alpha=\T_m$. 
For $j\in\underline m$, set $\comp \beta j=\comp\alpha j$ and 
define $\proj\beta\in\T_m$ by
the rule that $j\proj\beta=j\proj\alpha$ if $\comp\alpha j\neq \zt$ and
$j\proj\beta=1$ otherwise. Then the element $\beta=(\comp\beta 1,\ldots,\comp\beta m;\proj\beta)$ is in canonical form
and $\alpha\varphi=\beta\varphi$. Hence every congruence class of 
$\ker\varphi$ contains an element in canonical form.

Assume that $\alpha, \beta\in\PT_n\wr\T_m$ are in canonical form
such that $\alpha\varphi=\beta\varphi$. We have by Lemma~\ref{l21} that
$\comp\alpha j=\comp\beta j$ for all $j\in\underline m$ and that
$j\proj\alpha=j\proj\beta$ for all $j\in\underline m$
such that $\comp\alpha j\neq \zt$. On the other hand, if $\comp\alpha j=\zt$ then,
as $\alpha$ and $\beta$ are in canonical form, 
$j\proj\alpha=j\proj\beta=1$. Hence we obtain that $\alpha=\beta$ and
so each of the congruence classes contains precisely one element in 
canonical form.

To compute the order of $\PT_{n\times m}$, let us compute the number of elements
in canonical form. For $r\in\underline m$, 
let $A_r$ denote the number of elements $\alpha$ in canonical form
such that $|\{j\in\underline m\mid \comp\alpha j\neq \zt\}|=r$. Then
$$
A_r=\binom{m}r\left(|\PT_n|-1\right)^rm^r.
$$
Thus 
$$
|\PT_{n\times m}|=\sum_{r=0}^m\binom{m}r\left(|\PT_n|-1\right)^rm^r=
\sum_{r=0}^{m}\binom{m}{r}(m(n+1)^n-m)^r= (m(n+1)^n-m+1)^m,
$$
as claimed.
\end{proof}

The second assertion of Lemma~\ref{ordlemma} implies
Theorem~\ref{main}(i). 
The table below shows 
the order of the monoid $\PT_{n \times m}$ for small values of $m$ and $n$.  

{\small $$
\begin{array}{r|rrrrrr}
m&|\PT_m|& |\PT_{2\times m}|&|\PT_{3\times m}|&|\PT_{4\times m}|&|\PT_{5\times m}|\\
\hline
1&   2&         9&   64&   625& 7776\\
2&   9&       289& 16129& 1560001&115856201\\
3&   64&     15625& 6859000& 6570725617 &3150905752576\\
4&   625&   1185921& 4097152081& 38875337230081&296120751810639601\\
5&   7776& 115856201& 3150905752576& 296120751810639601&88798957515761812069376
\end{array}
$$}

\section{The rank}
\label{sectrank}


In this section we determine the ranks of $\PT_{n\times m}$ and $\PT_n\wr\T_m$
defined in Section~\ref{wreathsec}.

\begin{theorem}\label{rankmain}
If $m\ge2$ and $n\ge2$, then 
$\rank\PT_n\wr\T_m=\rank\PT_{n\times m}=5$. 
\end{theorem}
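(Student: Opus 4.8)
The plan is to establish the two equalities separately, though they are closely linked through the homomorphism $\varphi$ of Lemma~\ref{l21}. First I would prove that $\rank\PT_{n\times m}\le\rank\PT_n\wr\T_m$: since $\varphi\colon\PT_n\wr\T_m\to\PT_{n\times m}$ is surjective (Lemma~\ref{l21}(i)), the image of any generating set of $\PT_n\wr\T_m$ generates $\PT_{n\times m}$, giving the inequality in that direction at once. Thus it suffices to show $\rank\PT_n\wr\T_m\le5$ and $\rank\PT_{n\times m}\ge5$; together with the trivial chain these force both ranks to equal $5$.

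For the upper bound $\rank\PT_n\wr\T_m\le5$, I would exhibit an explicit list of five elements of $\PT_n\wr\T_m$ and argue they generate. The natural building blocks are: two elements that together generate $\S_n\wr\S_m$ (available by \cite{Araujo&Schneider:2009}, which shows the wreath product of two symmetric groups is $2$-generated); one element whose first component is an ``idempotent of rank $n-1$'' type transformation in $\T_n$ with projection $1$ (this, together with the units, should generate all of $\T_n\wr\S_m$, and then with a base-point transposition one reaches $\T_n\wr\T_m$); one element whose first component is the strictly partial map collapsing the domain by one point; and possibly one more to reach the $\zt$-type partial transformations. The argument proceeds in stages: (a) the units generate the group $\S_n\wr\S_m$ of units of $\PT_n\wr\T_m$; (b) adjoining a single non-invertible full transformation in the first slot and using conjugation by units yields all of $\T_n\wr\T_m$, by the classical fact $\rank\T_n=3$ and $\rank\T_m=3$ distributed across slots; (c) adjoining a single strictly-partial generator in the first slot and conjugating yields every element of $\PT_n$ in every slot, hence all of $\PT_n^m$, and combined with stage~(b) all of $\PT_n\wr\T_m$. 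Care is needed to check that five elements genuinely suffice, i.e. that the ``group-part'' and the ``collapsing'' and the ``partial'' features can be packaged so economically; I would likely take the two group generators of $\S_n\wr\S_m$ and then modify or augment them minimally, checking that $4$ do not suffice by the lower-bound argument below.

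For the lower bound $\rank\PT_{n\times m}\ge5$ (equivalently $\ge5$ for the wreath product, which then follows), I would use a counting/quotient obstruction: exhibit a monoid quotient of $\PT_{n\times m}$ that manifestly requires at least $5$ generators, or count the number of ``maximal'' subsemigroups / use the fact that every generating set must hit each of several independent ``layers.'' The cleanest route is to map $\PT_{n\times m}$ onto a small well-understood monoid whose rank is known to be $5$, for instance $\PT_{1\times m}=\PT_m$ has rank $4$, so that alone is not enough — instead I would argue directly. A generating set must contain: (i) enough elements to generate the group of units $\S_n\wr\S_m$ modulo the non-units, contributing the obstruction that at least two generators are units (since a group of order $>1$ that is not cyclic, and $\S_n\wr\S_m$ is not cyclic for $n,m\ge2$, cannot be generated together-with-non-units by fewer, and the $2$-generation of $\S_n\wr\S_m$ is tight); (ii) at least one generator of rank-type ``full but not bijective within a class'' to produce the non-invertible full transformations; (iii) at least one generator that is ``full on classes but collapses two classes'' to produce the non-injective actions on $\underline m$; (iv) at least one strictly partial generator. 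Making the independence of these four features precise — so that no single element can serve two of roles (ii)--(iv) simultaneously while the group still needs two — is the heart of the matter. I expect the main obstacle to be exactly this lower bound bookkeeping: ruling out clever $4$-element generating sets, which I would handle by associating to each element of $\PT_{n\times m}$ a small invariant vector (recording whether its induced map on $\underline m$ is a permutation, whether it is total, whether each slot transformation is a permutation, etc.) and showing that the submonoid generated by any four elements lies in a proper ``syntactic'' subclass. The upper bound, by contrast, should be a finite if somewhat intricate verification once the right five elements are written down.
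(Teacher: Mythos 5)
Your overall strategy coincides with the paper's: prove the lower bound for $\PT_{n\times m}$, prove the upper bound for $\PT_n\wr\T_m$ by an explicit $5$-element generating set built around the two generators of $\S_n\wr\S_m$, and transfer between the two monoids via the surjection $\varphi$. But two steps are genuinely incomplete or wrong. First, in the upper bound your stage~(b) fails: adjoining to $\S_n\wr\S_m$ a single non-invertible full transformation supported in one slot and conjugating by units yields only $\langle \T_n\comp\varepsilon 1,\dots,\T_n\comp\varepsilon m,\S_m\proj\varepsilon\rangle$, and a ``base-point transposition'' is itself a unit, so nothing with a non-injective induced map on the $m$ blocks is ever produced. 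Since $\rank(\T_n\wr\T_m)=4$, you need \emph{two} non-units at this stage, one per level (the paper's $\tau=[2,2,3,\ldots,n]\comp\varepsilon 1$ and $\overline\tau=[2,2,3,\ldots,m]\proj\varepsilon$), after which a single strictly partial element $\sigma=[\zt,2,\ldots,n]\comp\varepsilon 1$ suffices for all of $\PT_n\wr\T_m$ by Lemma~\ref{ranklemma}(i) and conjugation into the other slots; your tentative extra generator for the ``$\zt$-type'' maps is not needed. The correct tally is $2+2+1$, not $2+1+1(+1)$.

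Second, the lower-bound ``independence of roles'' argument, which you rightly call the heart of the matter, is not actually carried out, and this is where the proof lives or dies. It can be settled with three elementary invariants rather than a general syntactic subclass: (a) rank does not increase under products and the rank-$nm$ elements are exactly the units, so a generating set must contain an element of rank $nm-1$; (b) a product of full maps is full, while any product containing a strictly partial factor is strictly partial, so one needs \emph{both} a full and a strictly partial element of rank $nm-1$, i.e.\ two distinct non-units; (c) every element of rank $nm-1$ induces a \emph{permutation} of the $m$ blocks (a non-injective block map forces rank at most $(m-1)n\le nm-2$), and $\alpha\mapsto\proj\alpha$ is a homomorphism onto $\T_m$, so to reach an element such as $\beta_1\proj\varepsilon\varphi$ with $\rank\beta_1=m-1$ a third non-unit with non-injective block map is indispensable. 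Together with the two units forced by Lemma~\ref{ranklemma}(iii) and the non-cyclicity of $\S_n\wr\S_m$, this yields the bound $5$. Note also that your role~(iii) element need not be full; the only constraint on the third non-unit is that its induced map on blocks is not a permutation.
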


Before proving the theorem, we review
some known facts about the ranks and generating sets of the monoids
$\PT_n$, $\T_n$, $\T_n\wr\T_m$ and fix some notation. 
Let us consider the following elements of $\T_n\wr\T_m$: 
\begin{align*}
\xi_1&=\left\{\
\begin{array}{ll}
((1\;2)\comp\varepsilon 2)((2\;3\cdots m)\proj\varepsilon) & \mbox{if $m$ and $n$ are both even}\\
((1\;2)\comp\varepsilon 2)((1\;2\cdots m)\proj\varepsilon) & \mbox{otherwise;}
\end{array}\right.\\
\xi_2&=((1\;2\cdots n)\comp\varepsilon 1)((1\;2)\proj\varepsilon);\\
\tau&=
[2,2,3,\ldots,n]\comp\varepsilon 1;\\
\overline\tau&=[2,2,3,\ldots,m]\proj\varepsilon.
\end{align*} 
Then the following lemma is valid.

\begin{lemma}\label{ranklemma}
\begin{enumerate}
\item[(i)] If $\gamma\in\PT_n$ then $\PT_n=\left<\T_n,\gamma\right>$ if 
and only if $\gamma\in\PT_n\setminus\T_n$ and $\rank\gamma=n-1$. 
\item[(ii)] We have that $\rank\S_n\wr\S_m=2$ and $\rank\T_n\wr\T_m=4$. 
Further,
$\S_n\wr\S_m=\left<\xi_1,\xi_2\right>$ and 
$\T_n\wr\T_m=\left<\xi_1,\xi_2,\tau,\overline\tau\right>$. 
\item[(iii)] Let $P$ be a monoid of partial transformations of a 
finite set and let $G$ be the  group of permutations in 
$P$. If $X$ is a generating 
set of $P$ then $X\cap G$ is a generating set of $G$.
\end{enumerate}
\end{lemma}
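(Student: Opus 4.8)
I would treat the three parts in turn; parts~(iii) and~(i) are short and part~(ii) carries the real work. For part~(iii), write $\Omega$ for the underlying finite set and $n=|\Omega|$. The only facts needed are that $\rank(\alpha\beta)\leq\min\{\rank\alpha,\rank\beta\}$ for all partial transformations $\alpha,\beta$, and that the elements of $\PT(\Omega)$ of maximal rank $n$ are exactly the permutations of $\Omega$; in particular every element of $G$ has rank $n$. Now if $g\in G$ and $g=x_1\cdots x_k$ is a nonempty product of elements of $X$, then $n=\rank g\leq\rank x_i$ for each $i$, which forces $\rank x_i=n$, so $x_i$ is a permutation, i.e. $x_i\in X\cap G$; hence $g\in\langle X\cap G\rangle$. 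Since in a finite group every element, and the identity, is a monoid word in any group‑generating set, this yields $G=\langle X\cap G\rangle$.

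For part~(i), one direction: if $\PT_n=\langle\T_n,\gamma\rangle$ then $\gamma\notin\T_n$ (otherwise $\langle\T_n,\gamma\rangle=\T_n\neq\PT_n$), so $\gamma$ is strictly partial and $\rank\gamma\leq|\dom\gamma|\leq n-1$. If moreover $\rank\gamma\leq n-2$, then, again because rank does not increase under composition, every element of $\langle\T_n,\gamma\rangle$ is either a full transformation (a product of elements of $\T_n$ alone) or has rank at most $n-2$; but the strictly partial identity $\varepsilon_0$ on $\{1,\dots,n-1\}$ is neither, a contradiction, so $\rank\gamma=n-1$. For the converse, if $\gamma$ is strictly partial with $\rank\gamma=n-1$ then $|\dom\gamma|=n-1$ and $\gamma$ is injective on its domain, so pre‑ and post‑composing $\gamma$ with suitable permutations in $\S_n\subseteq\T_n$ produces exactly $\varepsilon_0$; since $\varepsilon_0$, being a strictly partial transformation of rank $n-1$, generates $\PT_n$ together with $\T_n$ (it is the standard fourth generator of $\PT_n$ over $\S_n$ and a corank‑$1$ idempotent; see \cite[Theorem~3.1.5]{GO}), it follows that $\langle\T_n,\gamma\rangle=\PT_n$.

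For part~(ii) I would first dispose of the numerical values: $\rank\S_n\wr\S_m=2$ (lower bound because $\S_n\wr\S_m$ is non‑cyclic for $n,m\geq2$, upper bound by \cite{Araujo&Schneider:2009}), and $\rank\T_n\wr\T_m=\rank\T_E=4$ by \cite{Araujo&Schneider:2009} (alternatively the lower bound follows by combining part~(iii), which forces at least $\rank\S_n\wr\S_m=2$ generators inside the unit group, with the result of \cite{Huisheng:2005a} that the relative rank of $\T_E$ over its units equals $2$). The substance is to verify the two displayed generating sets. For $\S_n\wr\S_m=\langle\xi_1,\xi_2\rangle$: both generators lie in $\S_n\wr\S_m$; the projection onto the top factor sends $\xi_2$ to the transposition $(1\,2)$ and $\xi_1$ to an $m$‑cycle or an $(m-1)$‑cycle, and in either case these generate $\S_m$, so $\langle\xi_1,\xi_2\rangle$ surjects onto $\S_m$; next, computing for instance $\xi_2^{\,2}=((1\,2\cdots n),(1\,2\cdots n),1,\dots,1;1)$ and a suitable power of $\xi_1$ — both lying in the base group — and conjugating these by preimages in $\langle\xi_1,\xi_2\rangle$ of elements of $\S_m$ to spread them over all $m$ coordinates, I would deduce that $\langle\xi_1,\xi_2\rangle$ contains the base group $\S_n^{\,m}$, hence equals $\S_n\wr\S_m$. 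For $\T_n\wr\T_m=\langle\xi_1,\xi_2,\tau,\overline\tau\rangle$: the unit group $\S_n\wr\S_m=\langle\xi_1,\xi_2\rangle$ is already available; conjugating $\tau=[2,2,3,\dots,n]\comp\varepsilon 1$ by units and multiplying, and using $\langle\S_n,[2,2,3,\dots,n]\rangle=\T_n$ (\cite[Theorem~3.1.3]{GO}), one obtains $(1,\dots,1,w,1,\dots,1;1)$ for every $w\in\T_n$ in every coordinate, hence the whole base $\T_n^{\,m}$ and therefore all of $\T_n\wr\S_m$; then conjugating $\overline\tau=[2,2,3,\dots,m]\proj\varepsilon$ by the elements of $\S_m$ and multiplying, and using $\langle\S_m,[2,2,3,\dots,m]\rangle=\T_m$, one obtains $(1,\dots,1;\mu)$ for every $\mu\in\T_m$; since any element of $\T_n\wr\T_m$ factors as $(s_1,\dots,s_m;1)(1,\dots,1;\mu)$, this gives everything.

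The step I expect to be the main obstacle is proving $\langle\xi_1,\xi_2\rangle\supseteq\S_n^{\,m}$: tracking base components through the twisted conjugation action of the top group is delicate, and — as the even/even versus ``otherwise'' split in the definition of $\xi_1$ signals — a careless choice of two generators lands in a proper, index‑$2$ subgroup, detectable in the abelianisation $(\S_n\wr\S_m)^{\mathrm{ab}}\cong\mathbb Z_2\times\mathbb Z_2$. So a genuine part of this step is checking that the images of $\xi_1$ and $\xi_2$ in that quotient are two distinct nonzero elements, and the case distinction on the parities of $n$ and $m$ is precisely what makes the signs of the cycles and transpositions involved work out in every case.
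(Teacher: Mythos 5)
Your proposal is correct in outline and, where it is carried through, in detail; but it takes a genuinely different route from the paper, whose entire proof of this lemma consists of three citations: part~(i) is \cite[Theorem~3.1.5]{GO}, part~(ii) is obtained by combining \cite[Theorem~4.1, Lemma~3.2, Theorem~1.1]{Araujo&Schneider:2009}, and part~(iii) is the argument of \cite[Lemma~3.1]{Araujo&Schneider:2009}. Your rank-based proofs of (iii) and of the forward direction of (i), and your reduction of the converse of (i) to the corank-one partial identity $\varepsilon_0$ by pre- and post-composing with permutations, are exactly the arguments those citations encapsulate, and they are complete and correct. Likewise your derivation of $\T_n\wr\T_m=\left<\xi_1,\xi_2,\tau,\overline\tau\right>$ from $\S_n\wr\S_m=\left<\xi_1,\xi_2\right>$ (conjugate $\tau$ into each coordinate, apply $\left<\S_n,[2,2,3,\ldots,n]\right>=\T_n$ coordinatewise, then adjoin the top via $\overline\tau$) is sound, and your observation about the abelianisation $\mathbb Z_2\times\mathbb Z_2$ correctly explains the parity case split in the definition of $\xi_1$. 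What your self-contained approach buys is transparency; what the paper's approach buys is that it never has to prove the one genuinely hard fact.

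That hard fact is the one place where your argument is not yet a proof: the step ``conjugating these by preimages of elements of $\S_m$ to spread them over all $m$ coordinates, I would deduce that $\left<\xi_1,\xi_2\right>$ contains $\S_n^{\,m}$'' does not follow as stated. Knowing that $N=\left<\xi_1,\xi_2\right>\cap\S_n^{\,m}$ contains $\xi_2^{\,2}$ and a power of $\xi_1$ and is invariant under the (twisted) coordinate permutations only shows that $N$ is a nontrivial $\S_m$-invariant subgroup of $\S_n^{\,m}$ containing certain elements; proper such subgroups exist (e.g.\ diagonal-type subgroups), so one must actually manufacture an element supported on a single coordinate, or run an inductive argument on the support, before concluding $N=\S_n^{\,m}$. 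This is precisely the content of \cite[Theorem~4.1]{Araujo&Schneider:2009}, which is stated for these exact generators $\xi_1,\xi_2$; you should either cite it at this point, as the paper does, or carry out the coordinate-isolation computation in full. With that step repaired (by citation or by computation), your proof is complete.
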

\begin{proof}
Statement~(i)  follows from~\cite[Theorem~3.1.5]{GO}. 
Combining~\cite[Theorem~4.1, Lemma~3.2, and 
Theorem~1.1]{Araujo&Schneider:2009} gives statement~(ii). 
(Note that the generators that correspond to $\tau$ and $\overline\tau$ appear 
in~\cite{Araujo&Schneider:2009} in a slightly different form.)
Finally statement~(iii) can be easily proved using 
the argument in~\cite[Lemma~3.1]{Araujo&Schneider:2009}.
\end{proof}


Now we can prove Theorem~\ref{rankmain}. 

\begin{proof}[Proof of Theorem~$\ref{rankmain}$]
First we show that the monoids $\PT_{n\times m}$ and 
$\PT_n\wr\T_m$ are generated by 5 elements. 
Since, by Lemma~\ref{l21}, $\PT_{n\times m}$ is a homomorphic image of 
$\PT_n\wr\T_m$, it suffices to show that $\PT_n\wr\T_m$ is generated by 5 elements.
Let $\sigma_1=[\zt,2,3,\ldots,n]$; then, by Lemma~\ref{ranklemma}(i), $\PT_n=\left<\T_n,\sigma_1\right>$. Set
$\sigma=\sigma_1\comp\varepsilon 1=(\sigma_1,1,\ldots,1;1)$. 
We claim that $\PT_n\wr\T_m=\left<\T_n\wr\T_m,\sigma\right>$. 
Once this assertion is proved, the claim that $\PT_n\wr\T_m$ is generated by 5~elements
will follow, as $\rank \T_n\wr\T_m=4$ (Lemma~\ref{ranklemma}(ii)).

Let us now prove that $\PT_n\wr\T_m=\left<\T_n\wr\T_m,\sigma\right>$. 
Set $W=\left<\T_n\wr\T_m,\sigma\right>$. 
First we note that $\T_m\proj\varepsilon\leq W$. The fact 
that $\PT_n=\left<\T_n,\sigma_1\right>$ implies that 
$\PT_n\comp\varepsilon 1=\left<\T_n\comp\varepsilon 1,\sigma\right>$. As $\T_n\comp\varepsilon 1\leq W$, we obtain that
$\PT_n\comp\varepsilon 1\leq W$. 
For $j\geq 1$, it is a consequence of equation~\eqref{product} that 
$((1\,j)\proj\varepsilon)(\PT_n\comp\varepsilon 1)((1\,j)
\proj\varepsilon)=\PT_n\comp\varepsilon j$.
As $\PT_n\comp\varepsilon 1\leq W$ and
$\S_m\proj\varepsilon\leq W$,
we find that $\PT_n\comp\varepsilon j\leq W$ for all 
$j\in\underline m$.  Since the monoids $\T_m\proj\varepsilon$ and 
$\PT_n\comp\varepsilon j$ with $j\in\underline m$ are contained in $W$ and they 
generate $\PT_n\wr\T_m$, we obtain that $\PT_n\wr\T_m\leq W$. As clearly, $W\leq \PT_n\wr\T_m$ we
conclude that $\PT_n\wr\T_m=W$, as claimed.

Next we show that the monoids $\PT_{n\times m}$ and 
$\PT_n\wr\T_m$ cannot be generated by fewer than~5 elements. 
We note, as above, that $\PT_{n\times m}$ is a homomorphic image of 
$\PT_n\wr\T_m$, and we need only prove this for
$\PT_{n\times m}$.
Let $G$ denote the unit group of $\PT_{n\times m}$. 
Let $X$ be a subset of $\PT_{n\times m}$ such that $\PT_{n\times m}=\left<G,X\right>$. 
The group $G$, being isomorphic to 
$\S_n\wr\S_m$, is generated by 2~elements
(Lemma~\ref{ranklemma}(ii)), 
and hence, by Lemma~\ref{ranklemma}(iii),
it suffices to show that $|X|\geq 3$. 
Since
 the rank of a  product is not greater than the
minimum of the ranks of its factors and elements
of rank $nm$ form a subgroup,
we obtain that $X$ must contain at 
least one element of rank $nm-1$. 

We claim that $X$ must in fact 
contain at least two elements of rank $nm-1$, namely
one strictly partial transformation, and one full transformation. Assume 
by contradiction that  $X$ only contains one element, $\alpha$ say, 
with rank $nm-1$. Then an element $\beta\in\PT_{n\times m}$  with rank $nm-1$ 
can be written as a product in $X$ and $\S_n\wr\S_m$ and such a product 
cannot contain a transformation with rank less than $nm-1$. Hence 
$\beta$ must be written as a product in $\S_n\wr\S_m$ and $\alpha$. 
Now, if $\alpha$ were a strictly partial transformation, then
no full transformation $\beta$ with  rank $nm-1$ could be written in this form,
which is a contradiction. Similarly, if $\alpha$ were a full transformation, then
no strictly partial transformation with rank $nm-1$ 
could be written as a product in $\S_n\wr\S_m$ and $\alpha$.
Thus $X$ must contain at least two elements with rank $nm-1$, 
namely one full and one strictly partial, as claimed.

Finally we prove that $X$ must contain a third element.  Assume by
contradiction that $X=\left<\alpha_1,\alpha_2\right>$. We have,
by the previous paragraph, that $\rank\alpha_1=\rank\alpha_2=nm-1$ and we may 
assume without loss of generality that $\alpha_1$ is full and $\alpha_2$ is 
strictly partial. 
Let $\beta_1\in\T_m$ be an element of rank $m-1$ and set
$\beta=\beta_1\proj\varepsilon\varphi$
where $\varphi:\PT_n\wr\T_m\rightarrow \PT_{n\times m}$ is the homomorphism defined before 
Lemma~\ref{l21}. 
By Lemma~\ref{l21}, the elements $\alpha_1$, 
$\alpha_2$, and $\beta$ have unique preimages $\alpha_1'$, $\alpha_2'$, and 
$\beta'$ in $\PT_n\wr\T_m$, respectively, under the homomorphism
$\varphi$. Since $\beta$ must be written as a word in $G$, $\alpha_1$, 
and $\alpha_2$, 
we have that $\beta'$ must be written as a word in $\S_n\wr\S_m$, $\alpha_1'$ and 
$\alpha_2'$. However, the map $\alpha\mapsto\overline\alpha$, 
defined before Lemma~\ref{l21}, is a homomorphism
$\PT_n\wr\T_m\rightarrow \T_m$ by equation~\eqref{product}. As $\overline{\alpha_1'}, 
\overline{\alpha_2'}\in\S_m$, we obtain that $\overline{\beta'}\in\S_m$.
Since $\overline\beta'=\beta_1$ and, by assumption, $\rank\beta_1=m-1$, this 
is a contradiction. Thus $X$ must contain a third element as claimed.
\end{proof}

The last lemma implies Theorem~\ref{main}(ii).
Lemma~\ref{ranklemma}(ii) and the argument of the proof of 
Theorem~\ref{rankmain} yield explicit generating sets for $\PT_n\wr\T_m$ and $\PT_{n\times m}$. 
Recall that the homomorphism 
$\varphi:\PT_n\wr\T_m\rightarrow \PT_{n\times m}$ was defined before 
Lemma~\ref{l21}.

\begin{corollary}\label{5gen}
Let $\xi_1$, $\xi_2$, $\tau$, $\overline\tau$ be the elements of $\T_n\wr\T_m$ defined before Lemma~$\ref{ranklemma}$ and let $\sigma$ be the element
defined in the proof of Theorem~$\ref{rankmain}$. 
Then $\{\xi_1,\xi_2,\tau,\overline\tau,\sigma\}$ and $\{\xi_1\varphi,\xi_2\varphi,\tau\varphi,\overline\tau\varphi,\sigma\varphi\}$ 
are minimal generating sets for 
 $\PT_n\wr\T_m$ and $\PT_{m\times n}$ respectively. 
\end{corollary}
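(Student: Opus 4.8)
The plan is to read off the corollary from facts already established rather than to compute anything new. First I would recall that Lemma~\ref{ranklemma}(ii) gives $\T_n\wr\T_m=\left<\xi_1,\xi_2,\tau,\overline\tau\right>$, while the proof of Theorem~\ref{rankmain} shows that $\PT_n\wr\T_m=\left<\T_n\wr\T_m,\sigma\right>$. Combining these two facts yields $\PT_n\wr\T_m=\left<\xi_1,\xi_2,\tau,\overline\tau,\sigma\right>$, so this is a generating set of cardinality at most~$5$. Since $\rank\PT_n\wr\T_m=5$ by Theorem~\ref{rankmain}, the set cannot have fewer than $5$ elements; hence it consists of exactly $5$ elements and is a least-order, i.e.\ minimal, generating set.

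For the second claim I would push this generating set through the homomorphism $\varphi\colon\PT_n\wr\T_m\rightarrow\PT_{n\times m}$, which is surjective by Lemma~\ref{l21}(i). The image of a generating set under a surjective homomorphism generates the target, so $\{\xi_1\varphi,\xi_2\varphi,\tau\varphi,\overline\tau\varphi,\sigma\varphi\}$ generates $\PT_{n\times m}$; and since $\rank\PT_{n\times m}=5$ by Theorem~\ref{rankmain}, this set again has exactly $5$ elements and is minimal.

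There is no genuine obstacle here: the corollary is purely a bookkeeping consequence of Theorem~\ref{rankmain} together with the explicit generating sets constructed in its proof. The only point worth stating explicitly is that passing to the homomorphic image cannot collapse the generating set below size~$5$, and this is immediate, since a generating set of $\PT_{n\times m}$ of size at most~$4$ would contradict $\rank\PT_{n\times m}=5$.
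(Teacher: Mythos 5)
Your argument is correct and follows exactly the route the paper intends: the corollary is presented there as an immediate consequence of Lemma~\ref{ranklemma}(ii) (which gives $\T_n\wr\T_m=\left<\xi_1,\xi_2,\tau,\overline\tau\right>$), the step $\PT_n\wr\T_m=\left<\T_n\wr\T_m,\sigma\right>$ from the proof of Theorem~\ref{rankmain}, the surjectivity of $\varphi$, and the rank computation to guarantee minimality. Nothing is missing.
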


\section{Presentations for $\PT_n\wr\T_m$ and $\PT_{n\times m}$}\label{pressect}

Let $S$ be a monoid with generating set $X$ and let $X^*$ denote the free
monoid on $X$. 
A word $w$ in $X$ can be considered as an element of $S$ and
also as an element of $X^*$, but this will cause no 
confusion. Let $R$ denote the  equivalence relation
on $X^*$ defined by the rule that $(w_1,w_2)\in R$ if and only if 
$w_1=w_2$ holds in $S$. Then $R$ is in fact 
a congruence relation. Suppose, further,
that $\R\subseteq X^*\times X^*$ such that the smallest congruence
relation on $X^*$ containing $\R$ is $R$. In this case we say that 
$\R$ is a {\em set of  defining relations} for $S$ with 
respect to the generating set $X$ and write that
$S=\left<X\mid\R\right>$. The pair $\left<X\mid\R\right>$ is 
called a {\em presentation} for $S$. A relation $(w_1,w_2)\in\R$ is
usually written as $w_1=w_2$.

Let $n, m\geq 2$. 
To discuss presentations of $\PT_n\wr\T_m$ and $\PT_{n\times m}$ let us
fix generators of these monoids.
As in Section~\ref{wreathsec}, for $i\in\underline m$, 
$\comp\varepsilon i$ denotes the $i$-th coordinate
embedding $\PT_n\rightarrow 
\PT_n\wr\T_m$ and  $\proj\varepsilon$ denotes the embedding $\T_m\rightarrow 
\PT_n\wr\T_m$ into the $(m+1)$-th component. 
Let us consider the following elements of
$\PT_n\wr\T_m$:
\begin{align*}
\pi&=(1\,2)\comp\varepsilon 1;\\
\varrho&=(1\,2\ldots n)\comp\varepsilon 1;\\
\tau &=[2,2,3,4,\ldots,n]\comp\varepsilon 1;\\
\sigma &=[\zt,2,3,\ldots,n]\comp\varepsilon 1;\\
\overline\pi&=(1\,2)\proj\varepsilon;\\
\overline\varrho&=(1\,2\ldots m)\proj\varepsilon;\\
\overline\tau&=[2,2,3,4,\ldots,m]\proj\varepsilon.\\
\end{align*}

We have that $\left<\pi,\varrho,\tau,\sigma\right>\cong
\PT_n$ under the map $\comp\varepsilon 1$ and that 
$\left<\overline\pi,\overline\varrho,\overline\tau\right>\cong\T_m$ under the embedding $\proj\varepsilon$. 
Hence we may identify $\PT_n$ with $\left<\pi,\varrho,\tau,\sigma\right>$ 
and $\T_m$ with $\left<\overline\pi,\overline\varrho,\overline\tau\right>$.
Let $\R_P$ and $\R_T$ be sets of defining relations for 
$\PT_n$ and $\T_m$ with respect to the 
generating sets $\{\pi,\varrho,\tau,\sigma\}$ and 
$\{\overline\pi,\overline\varrho,\overline\tau\}$, respectively.
Explicit expressions for $\R_T$ and $\R_P$ can be 
found in~\cite[Section~2]{Fernandes:2002}.

Let $S$ and $Q$ be two monoids defined by the presentations 
$\langle X_S\mid \R_S\rangle$ and $\langle X_Q\mid \R_Q\rangle$, 
respectively. 
Let $S\sdp Q$ be a semidirect product of $S$ by $Q$ with respect
to a left action of $Q$ on $S$. 
For each $a\in X_S$ and $b\in X_Q$, denote by ${\kern1pt}^b{\kern-.5pt}a$ a 
(fixed) word of $X_S^*$ that represents the image of $a$ under $b$
in $S$. 
Then Lavers proved in \cite[Corollary 2]{Lavers:1998} 
(see also \cite{Araujo:2001}) that 
\begin{equation}\label{semidir}
S\rtimes Q=\langle X_S\cup X_Q\mid \R_S\cup\R_Q\cup\{ba=({\kern1pt}^b{\kern-.5pt}a)b\mid a\in X_S,b\in X_Q\} \rangle.
\end{equation}
Using this result in the case when the left action is trivial, we obtain
that 
\begin{equation}\label{dirpr}
S\times Q=\langle X_S\cup X_Q\mid \R_S\cup \R_Q\cup\{ ba=ab\mid a\in X_S,b\in X_Q\} \rangle.
\end{equation}

Next define the following sets of relations in 
$\left\{\pi,\varrho,\tau,\sigma,\overline \pi,\overline \varrho,\overline \tau\right\}$:
\begin{align*}
\R_1 =&\{ \overline \varrho^{m-j+1} u \overline \varrho^{m+j-k} v \overline \varrho^{k-1} = 
   \overline \varrho^{m-k+1} v \overline \varrho^{m+k-j} u \overline \varrho^{j-1}\mid
   j, k\in\underline m,\ j<k,\  u, v\in\{\pi,\varrho,\tau,\sigma\}\};\\
\R_2=&\{\overline \pi \overline \varrho^{m-1} u \overline \varrho = u \overline \pi, 
\overline \pi \overline \varrho^{m-j+1} u \overline \varrho^{j-1} = \overline \varrho^{m-j+1} u \overline \varrho^{j-1} \overline \pi\mid 3\leq
j\leq m,\ u\in\{\pi,\varrho,\tau,\sigma\}\};\\
\R_3=&\{\overline \tau u =  \overline \tau, 
\overline \tau \overline \varrho^{m-1} u \overline \varrho = u\overline\varrho^{m-1}u\overline\varrho\overline 
\tau,\ \overline \tau \overline \varrho^{m-j+1} u \overline \varrho^{j-1} = \overline \varrho^{m-j+1} u \overline \varrho^{j-1} \overline \tau
\mid \\
&3\leq j\leq m,\, u\in\{\pi,\varrho,\tau,\sigma\}\}.
\end{align*}

The next result states presentations for $\PT_n\wr\T_m$ and for $\PT_{n\times m}$ 
with respect to the generating sets 
$\left\{\pi,\varrho,\tau,\sigma,\overline \pi,
\overline \varrho,\overline \tau\right\}$ and 
$\left\{\pi\varphi,\varrho\varphi,\tau\varphi,\sigma\varphi,
\overline \pi\varphi,\overline \varrho\varphi,
\overline \tau\varphi\right\}$, respectively, 
where $\varphi:\PT_n\wr\T_m\rightarrow \PT_{n\times m}$ 
is the epimorphism defined before Lemma~\ref{l21}. To simplify notation, 
we will omit ``$\varphi$'' from the 
description of the elements of $\PT_{n\times m}$, and hence
we will consider an element of $\PT_n\wr\T_m$ as 
an element of $\PT_{n\times m}$.

\begin{theorem}\label{presth}
(i) $\PT_n\wr\T_m=\left<\pi,\varrho,\tau,\sigma,\overline \pi,\overline \varrho,
\overline \tau\mid
\R_P\cup\R_T\cup\R_1\cup\R_2\cup\R_3\right>;$

(ii) $\PT_{n\times m}=\left<\pi,\varrho,\tau,\sigma,\overline \pi,\overline \varrho,
\overline \tau\mid
\R_P\cup\R_T\cup\R_1\cup\R_2\cup\R_3\cup\{(\varrho\sigma)^n=(\varrho\sigma)^n\overline\tau\}\right>$.
\end{theorem}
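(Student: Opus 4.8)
The plan is to obtain part~(i) from Lavers' presentation~\eqref{semidir} for a monoid semidirect product, applied to the decomposition $\PT_n\wr\T_m=\PT_n^m\sdp\T_m$ of Section~\ref{notations}, and then to deduce part~(ii) by adjoining to the presentation in~(i) the single defining relation coming from the pair that generates $\ker\varphi$ in Lemma~\ref{l21}(iii). For part~(i) I would first enlarge the generating set of the base monoid: for each $j\in\underline m$ and each $a\in\{\pi,\varrho,\tau,\sigma\}$ let $a\comp\varepsilon j$ denote the copy of $a$ in the $j$-th coordinate, so that $a\comp\varepsilon 1=a$. Iterating the direct-product presentation~\eqref{dirpr} yields $\PT_n^m=\langle\{a\comp\varepsilon j\}\mid \bigcup_{j}\R_P^{(j)}\cup\mathcal C\rangle$, where $\R_P^{(j)}$ is $\R_P$ rewritten in the $j$-th coordinate generators and $\mathcal C=\{(u\comp\varepsilon j)(v\comp\varepsilon k)=(v\comp\varepsilon k)(u\comp\varepsilon j)\mid j<k,\ u,v\in\{\pi,\varrho,\tau,\sigma\}\}$ records that distinct coordinates commute. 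Lavers' theorem then presents $\PT_n\wr\T_m$ on the generators $\{a\comp\varepsilon j\}\cup\{\overline\pi,\overline\varrho,\overline\tau\}$ by these relations together with $\R_T$ and the action relations $b(a\comp\varepsilon j)=\act b{(a\comp\varepsilon j)}\,b$ for each $b\in\{\overline\pi,\overline\varrho,\overline\tau\}$.

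Next I would read off the action words from the coordinate permutation~\eqref{action}, namely $\act\tau{(s_1,\ldots,s_m)}=(s_{1\tau},\ldots,s_{m\tau})$: the element $\act b{(a\comp\varepsilon j)}$ places $a$ in exactly those coordinates that are $b$-preimages of $j$. Concretely one gets $\act{\overline\varrho}{(a\comp\varepsilon j)}=a\comp\varepsilon{j-1}$ (indices mod $m$); $\act{\overline\pi}{(a\comp\varepsilon 1)}=a\comp\varepsilon 2$, $\act{\overline\pi}{(a\comp\varepsilon 2)}=a\comp\varepsilon 1$, and $\act{\overline\pi}{(a\comp\varepsilon j)}=a\comp\varepsilon j$ for $j\ge3$; and $\act{\overline\tau}{(a\comp\varepsilon 1)}=1$, $\act{\overline\tau}{(a\comp\varepsilon 2)}=(a\comp\varepsilon 1)(a\comp\varepsilon 2)$, $\act{\overline\tau}{(a\comp\varepsilon j)}=a\comp\varepsilon j$ for $j\ge3$, the empty preimage of $1$ under $\overline\tau$ giving the identity word.

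Then I would eliminate the redundant generators by Tietze transformations. The $\overline\varrho$-action relation $\overline\varrho(a\comp\varepsilon j)=(a\comp\varepsilon{j-1})\overline\varrho$ lets me set $a\comp\varepsilon j=\overline\varrho^{m-j+1}a\overline\varrho^{j-1}$ for $2\le j\le m$, matching the convention implicit in the statement. After this substitution I expect: the $\overline\varrho$-relations to become identities modulo $\overline\varrho^m=1$; each $\R_P^{(j)}$ with $j\ge2$ to become a $\overline\varrho$-conjugate of $\R_P$ and hence redundant; the commuting relations $\mathcal C$ to become exactly $\R_1$; the $\overline\pi$-relations to become $\R_2$; and the $\overline\tau$-relations to become $\R_3$. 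Collecting what survives gives the presentation in part~(i). I expect the bookkeeping in this final matching step to be the main obstacle: one must verify that the reversed and wrap-around action relations (the $j=1$ and $j=m$ cases) follow from the listed relations together with the group identities $\overline\pi^2=1$ and $\overline\varrho^m=1$, which are consequences of $\R_T$ — for instance $\overline\pi(a\comp\varepsilon 1)=(a\comp\varepsilon 2)\overline\pi$ is derivable from the $j=2$ relation of $\R_2$ by conjugating with $\overline\pi$ — and that the higher-coordinate copies of $\R_P$ are genuinely redundant. It is precisely here that the special shapes of $\R_1$, $\R_2$, $\R_3$ must be reconciled with the mechanically produced Lavers relations.

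Finally, part~(ii) follows quickly. By Lemma~\ref{l21} we have $\PT_{n\times m}\cong(\PT_n\wr\T_m)/\ker\varphi$, and adjoining to a presentation $\langle X\mid\R\rangle$ the relations that generate a congruence yields a presentation of the corresponding quotient; so part~(ii) reduces to adding to the presentation in~(i) the single relation $\zt\comp\varepsilon 1=(\zt\comp\varepsilon 1)\overline\tau$ arising from the generating pair of $\ker\varphi$ in Lemma~\ref{l21}(iii) (whose element $\tau$ is the present $\overline\tau$). It then remains only to write $\zt\comp\varepsilon 1$ as a word in the chosen generators: a direct computation in $\PT_n$ shows $\varrho\sigma=[2,3,\ldots,n,\zt]$, which deletes the largest point of the domain at each application, so that $(\varrho\sigma)^n=\zt$ and hence $\zt\comp\varepsilon 1=(\varrho\sigma)^n$. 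The added relation is therefore $(\varrho\sigma)^n=(\varrho\sigma)^n\overline\tau$, exactly as claimed.
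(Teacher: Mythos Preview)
Your proposal is correct and follows essentially the same approach as the paper: decompose $\PT_n\wr\T_m$ as $(\PT_n)^m\sdp\T_m$, apply~\eqref{dirpr} and Lavers' result~\eqref{semidir} to obtain a presentation on the enlarged generating set $\{a\comp\varepsilon j\}\cup\{\overline\pi,\overline\varrho,\overline\tau\}$, then eliminate the redundant generators via $a\comp\varepsilon j=\overline\varrho^{m-j+1}a\,\overline\varrho^{j-1}$ and match the resulting relations to $\R_1,\R_2,\R_3$. Your treatment of part~(ii), including the computation $(\varrho\sigma)^n=\zt$, is exactly the paper's argument as well.
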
 
\begin{proof}
(i) By the definition of the wreath product, 
\begin{equation}\label{wrdec}
\PT_n\wr\T_m=\left((\PT_n)\comp\varepsilon 1\times\cdots\times (\PT_n)\comp\varepsilon m\right)\rtimes(\T_m\proj\varepsilon).
\end{equation}
For $i\in\{1,\ldots,m\}$ set $\pi_i$, $\varrho_i$, $\tau_i$ and $\sigma_i$
as $(1\,2)\comp\varepsilon i$, 
$(1\,2\ldots n)\comp\varepsilon i$, $[2,2,3,4,\ldots,n]\comp\varepsilon i$, and $[\zt,2,3,\ldots,n]\comp\varepsilon i$, respectively. 
Set $\G_i=\{\pi_i, \varrho_i, \tau_i,\sigma_i\}$ and
$\proj\G= \{\overline\pi, \overline\varrho, \overline\tau\}$. 
Then, by~\eqref{wrdec}, 
the set $\G_1\cup\cdots\cup\G_m\cup\proj \G$ is a generating set
of $\PT_n\wr\T_m$ and combining~\eqref{semidir}, \eqref{dirpr} and
the definition of the left action of $\T_m$ on $(\PT_n)^m$ given
in Section~\ref{notations}, we obtain that 
\begin{equation}\label{pres1}
\PT_n\wr\T_m=\left<\G_1\cup\cdots\cup\G_m\cup\proj \G\mid \R_{1P}\cup\cdots\cup\R_{mP}\cup\R_T\cup\R\right>
\end{equation}
where, for $i\in\{1,\ldots,m\}$ the set $\R_{iP}$ is the set of relations
that we obtain from $\R_P$ by substituting $\pi_i$, $\varrho_i$, $\tau_i$ and 
$\sigma_i$ in the places of $\pi$, $\varrho$, $\tau$ and 
$\sigma$, respectively.
Moreover, $\R$ is the set consisting of the following relations:
\begin{equation}\label{eqc}
uv=vu\quad u\in\ \G_i,\ v\in \G_j\mbox{ with }i<j;
\end{equation}
\begin{align}
\label{eqp1}\overline\pi\pi_1&=\pi_2\overline\pi,\ &\overline\pi\varrho_1&=\varrho_2\overline\pi,\ &\overline\pi\tau_1&=\tau_2\overline\pi,\ &\overline\pi\sigma_1&=\sigma_2\overline\pi;&\\
\label{eqp2}\overline\pi\pi_2&=\pi_1\overline\pi,\ &\overline\pi\varrho_2&=\varrho_1\overline\pi,\ &\overline\pi\tau_2&=\tau_1\overline\pi,\ &\overline\pi\sigma_2&=\sigma_1\overline\pi;&\\
\label{eqp3}\overline\pi\pi_i&=\pi_i\overline\pi,\ &\overline\pi\varrho_i&=\varrho_i\overline\pi,\ &\overline\pi\tau_i&=\tau_i\overline\pi,\ &\overline\pi\sigma_i&=\sigma_i\overline\pi& 3\leq i\leq m;\\
\label{eqr1}\overline\varrho\pi_i&=\pi_{i-1}\overline\varrho,\ &\overline\varrho\varrho_i&=\varrho_{i-1}\overline\varrho,\ &\overline\varrho\tau_i&=\tau_{i-1}\overline\varrho,\ &\overline\varrho\sigma_i&=\sigma_{i-1}\overline\varrho& 2\leq i\leq m;\\
\label{eqr2}\overline\varrho\pi_1&=\pi_{m}\overline\varrho,\ &\overline\varrho\varrho_1&=\varrho_{m}\overline\varrho,\ &\overline\varrho\tau_1&=\tau_{m}\overline\varrho,\ &\overline\varrho\sigma_1&=\sigma_{m}\overline\varrho;&\\
\label{eqt1}\overline\tau\pi_1&=\overline\tau,\ &\overline\tau\varrho_1&=\overline\tau,\ &\overline\tau\tau_1&=\overline\tau,\ &\overline\tau\sigma_1&=\overline\tau;&\\
\label{eqt2}\overline\tau\pi_2&=\pi_1\pi_1\overline\tau,\ &\overline\tau\varrho_2&=\varrho_1\varrho_2\overline\tau,\ &\overline\tau\tau_2&=\tau_1\tau_2\overline\tau,\ &\overline\tau\sigma_1&=\sigma_1\sigma_2\overline\tau;&\\
\label{eqt3}\overline\tau\pi_i&=\pi_i\overline\tau,\ &\overline\tau\varrho_i&=\varrho_i\overline\tau,\ &\overline\tau\tau_i&=\tau_i\overline\tau,\ &\overline\tau\sigma_i&=\sigma_i\overline\tau&3\leq i\leq m.
\end{align}

Note that $\pi_1=\pi$, $\varrho_1=\varrho$, $\tau_1=\tau$, and $\sigma_1=\sigma$.
Further, $i\geq 2$, we have that $\pi_i=\proj\varrho^{m-i+1}\pi\proj\varrho^{i-1}$ and
we have analogous expressions for $\varrho_i$, $\tau_i$, and $\sigma_i$.
Hence for $i\geq 2$, we can remove the elements of $\G_i$ from the
generating set given in~\eqref{pres1} and we can also remove
the relation sets $\R_{iP}$.
Further we replace the occurrences in $\R$ 
of the
generators belonging to $\G_i$  by the corresponding expressions in $\pi$, 
$\varrho$, $\tau$ and $\sigma$. This way the relations 
in~\eqref{eqr1}--\eqref{eqr2} will transform into the trivial relation.
As $\pi^2=1$ follows from the relations
in $\R_P$, the relations in~\eqref{eqp2} can be removed, as they follow
from the relations in~\eqref{eqp1}.
This way,  the relations in~\eqref{eqc} lead to 
the relations in $\R_1$, the relations in~\eqref{eqp1} and~\eqref{eqp3} give
the relations in $\R_2$, and finally the relations 
in~\eqref{eqt1}--\eqref{eqt3} will result in  the relation set $\R_3$.
This shows that the presentation in part~(i) is valid.

(ii) As $(\varrho\sigma)^n=\zt$, 
the second statement follows from  part~(i) and Lemma~\ref{l21}(iii).
\end{proof}

Theorem~\ref{presth} gives a presentation of $\PT_n\wr\T_m$ and $\PT_{n\times m}$ 
in terms of 7 generators, even though we proved in Theorem~\ref{rankmain}
that these monoids are generated by 5~generators. 
In order to pass to the 5-element generating set, we  replace
the 4-generator set $\{\pi,\varrho,\overline\pi,\overline\varrho\}$
of $\S_n\wr\S_m$
used in Theorem~\ref{presth} with the generating set $\{\xi_1,\xi_2\}$
given at the beginning of Section~\ref{sectrank}.
In order two write down a presentation that is satisfied
by the generating set $\{\xi_1,\xi_2,\tau,\sigma,\overline\tau\}$,
we need to express the generators $\pi,\varrho,\overline\pi,\overline\varrho$
in terms of the generators $\xi_1$ and $\xi_2$.
We do this in the next lemma, whose proof, though rather technical, is routine, 
and so we omit the details. We note that the following lemma also 
implies~\cite[Theorem~4.1]{Araujo&Schneider:2009} that
$\S_n\wr\S_m$ is generated by two elements.

\begin{lemma}
Let $m, n\geq 2$. If $m$ and $n$ are both even then 
\begin{align}
\label{eq1}\pi&=(\xi_1^{m-1}\xi_2^2)^{(m-2)(n-1)^2} (\xi_1\xi_2^2)^{(m-1)(n^2-n-1)} (\xi_1\xi_2)^m; \\
\label{eq2}\varrho&=\left( (\xi_1^{m-1}\xi_2^2)^{(m-2)(n-1)^2} (\xi_1\xi_2^2)^{(m-1)(n^2-n-1)} \right)^{n-1}; \\
\label{eq3}\overline\pi&=\xi_2^{2n-1} \left( (\xi_1^{m-1}\xi_2^2)^{(m-2)(n-1)^2} (\xi_1\xi_2^2)^{(m-1)(n^2-n-1)} \right)^{n-1};\\
\label{eq4}\overline\varrho&=\xi_2^{2n-1} \left( (\xi_1^{m-1}\xi_2^2)^{(m-2)(n-1)^2} (\xi_1\xi_2^2)^{(m-1)(n^2-n-1)} \right)^n (\xi_1\xi_2)^m \xi_2^{2n-1} \xi_1 \xi_2. 
\end{align}
If either $m$ or $n$ is odd then 
\begin{align}
\label{eq5}\pi&=(\xi_1\xi_2^{2n-1})^{(m-1)n} (\xi_1^{m+1}\xi_2^{2n-1})^{(m-1)(n-1)} \xi_1^m;\\
\label{eq6}\varrho&=(\xi_1\xi_2^{2n-1})^{n-1} (\xi_1^{m+1}\xi_2^{2n-1})^{(m-2)(n-1)};\\
\label{eq7}\overline \pi&=\xi_1^{2m-1} (\xi_1\xi_2^{2n-1})^n (\xi_1^{m+1}\xi_2^{2n-1})^{(m-2)(n-1)};\\
\label{eq8}\overline \varrho&=\xi_1^{2m-1}  (\xi_1\xi_2^{2n-1})^{(m-1)n} (\xi_1^{m+1}\xi_2^{2n-1})^{(m-1)(n-1)} \xi_1^{m+2}.
\end{align}
\end{lemma}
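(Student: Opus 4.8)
The plan is to construct explicit words in $\xi_1$ and $\xi_2$ that represent each of $\pi$, $\varrho$, $\overline\pi$, $\overline\varrho$, working inside the group $\S_n\wr\S_m$ and treating the two parity cases separately because the two definitions of $\xi_1$ differ. First I would fix the concrete realization of $\S_n\wr\S_m$ as acting on $\underline n\times\underline m$ (via the embeddings $\comp\varepsilon i$ and $\proj\varepsilon$) so that $\xi_1$ and $\xi_2$ become honest permutations of an $nm$-element set, and then compute the permutation induced by a given candidate word by tracking the images of the points $(i,j)$. The key observation driving the whole computation is that $\xi_2=((1\,2\cdots n)\comp\varepsilon 1)((1\,2)\proj\varepsilon)$ acts ``locally'' in the first two blocks and transposes those blocks, so powers of $\xi_2$ already give access to $(1\,2\cdots n)$-type rotations in a single fibre as well as to the block transposition $\overline\pi$; concretely one checks that $\xi_2^2$ fixes the block coordinate and acts as an $n$-cycle on fibre $1$ times an $n$-cycle on fibre $2$, while $\xi_2^{2n-1}=\xi_2^{-1}$ isolates the block swap once the fibre part is killed. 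Meanwhile $\xi_1$ contributes the block cycle $(1\,2\cdots m)$ (or $(2\,3\cdots m)$) together with a single fibre transposition $(1\,2)\comp\varepsilon 2$, so conjugating the fibre data produced from $\xi_2$ around by powers of $\xi_1$ spreads it over all $m$ blocks.

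Concretely, in the ``$m$ or $n$ odd'' case I would argue in three stages. Stage one: show that $(\xi_1\xi_2^{2n-1})$ and $(\xi_1^{m+1}\xi_2^{2n-1})$ each have trivial block component (because $\xi_2^{2n-1}$ inverts the block swap contributed by $\xi_2$ and $\xi_1^{m+1}=\xi_1$ modulo suitable relations, or more carefully, because the block parts of $\xi_1$ and $\xi_2$ are $(1\,2\cdots m)$ and $(1\,2)$, and $(1\,2\cdots m)(1\,2)\cdots$ multiplies out to the identity on blocks after the stated exponents) so that they lie in the base group $\S_n^m$; identify their fibre-tuples explicitly. Stage two: by raising to the $(n-1)$-st and $(m-2)(n-1)$-th powers and multiplying, obtain a base-group element whose fibre-$1$ component is $(1\,2\cdots n)$ and whose other components are trivial — this is exactly the claimed formula \eqref{eq6} for $\varrho$. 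Stage three: multiply by the appropriate power of $\xi_1$ (namely $\xi_1^m$, which has trivial block part since $m$ divides the order of the block cycle only when we are careful, so really $\xi_1^m=(1\,2)\comp\varepsilon ?$-type residue) to collapse $\varrho$ down to the transposition $\pi=(1\,2)\comp\varepsilon 1$, giving \eqref{eq5}; and similarly pre-multiplying by $\xi_1^{2m-1}$ or $\xi_1^{2m-1}\xi_2^{2n-1}$-flavoured prefixes extracts the pure block permutations $\overline\pi=(1\,2)\proj\varepsilon$ and $\overline\varrho=(1\,2\cdots m)\proj\varepsilon$, which are \eqref{eq7} and \eqref{eq8}. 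The even/even case \eqref{eq1}--\eqref{eq4} is handled by the same bookkeeping, the only change being that $\xi_1$ now carries the block cycle $(2\,3\cdots m)$ of length $m-1$ rather than $m$, which is why the exponents involve $(m-2)$ and $(m-1)$ in different places and why an extra prefix $\xi_2^{2n-1}$ appears to realign fibre $1$.

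I expect the main obstacle to be purely one of verification bookkeeping rather than conceptual difficulty: one must check that each of the four displayed words, when multiplied out as a permutation of $\underline n\times\underline m$, really equals the stated generator, and the exponents are large and parity-dependent, so a sign or off-by-one error is easy to make. The cleanest way to control this is to never expand a word fully but instead to track only two pieces of data through each factor — the induced block permutation $\overline{(\cdot)}\in\S_m$ and, once that is known to be trivial, the tuple of fibre permutations in $\S_n^m$ — using the multiplication rule \eqref{product}; since the block part is a homomorphic image, one first pins down the exponents that make the block part trivial, and only then does the genuinely multiplicative computation in $\S_n^m$, which is abelian across distinct coordinates and therefore quite manageable. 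Because this is a routine if lengthy check, and because the statement is only needed to rewrite the 7-generator presentation of Theorem~\ref{presth} on the 5-element set $\{\xi_1,\xi_2,\tau,\sigma,\overline\tau\}$, I would present the four identities together with the indication above of how to verify them and omit the full arithmetic, exactly as the paper does.
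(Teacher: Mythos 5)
The paper offers no proof of this lemma at all: the authors declare the verification ``though rather technical, is routine'' and omit it. So your overall strategy --- realize $\S_n\wr\S_m$ concretely on $\underline n\times\underline m$, and for each candidate word first determine its block component in $\S_m$ and then its fibre tuple in $\S_n^m$ --- is precisely the direct verification the authors intend, and your preliminary observations (that $\xi_2^2=((1\,2\cdots n),(1\,2\cdots n),1,\ldots,1;1)$ and that $\xi_2^{2n-1}=\xi_2^{-1}$) are correct.

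There is, however, a concrete error in your stage one that would derail stages two and three as written. The factors $\xi_1\xi_2^{2n-1}$ and $\xi_1^{m+1}\xi_2^{2n-1}$ do \emph{not} have trivial block component: in the odd case each has block part $(1\,2\cdots m)(1\,2)=(2\,3\cdots m)$, an $(m-1)$-cycle, nontrivial whenever $m\geq 3$. Only the \emph{total} block part of each displayed word is trivial; for instance in \eqref{eq6} the exponents sum to $(n-1)+(m-2)(n-1)=(m-1)(n-1)$, a multiple of the order $m-1$ of that cycle. Hence the individual factors do not lie in the base group $\S_n^m$, and ``raising to powers and multiplying'' is not the componentwise, abelian-across-coordinates computation you describe at the end: by the rule $(s;\mu)^k=(s\cdot{}^{\mu}s\cdots{}^{\mu^{k-1}}s;\mu^k)$, each successive factor's fibre data is shifted into a different block by the accumulated nontrivial block part, and it is exactly this twisting that distributes the transposition $(1\,2)\comp\varepsilon 2$ and the cycle $(1\,2\cdots n)\comp\varepsilon 1$ around the $m$ blocks so that everything cancels except the intended component. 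So one cannot ``first make the block part trivial and only then compute in $\S_n^m$''; the fibre computation must be carried through the stages where the block part is still nontrivial. Likewise $\xi_1^m$ in the odd case is the diagonal element $((1\,2),\ldots,(1\,2);1)$, not an element of the form $(1\,2)\comp\varepsilon i$, so the final multiplication in stage three also depends on the twisting having already positioned the nontrivial fibre entries correctly. None of this invalidates your method --- the exponent bookkeeping for the block parts does check out, and a careful twisted-product computation along these lines does establish the lemma --- but the claim that the factors lie in the base group is false for $m\geq 3$ and the plan needs to be reorganized around twisted products before it constitutes a proof.
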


Suppose that $w_1$, $w_2$, $w_3$ and $w_4$ are the words on the right hand sides 
of the equations~\eqref{eq1}--\eqref{eq4}, when both $m$ and $n$ are even,
or of the equations~\eqref{eq5}--\eqref{eq8} otherwise, respectively.
Let $\overline \R_P$, $\overline \R_T$, $\overline \R_1$, $\overline 
\R_2$ and $\overline \R_3$
the sets of relations that are constructed by 
substituting in $\R_P$, $\R_T$, $\R_1$, $\R_2$ and $\R_3$
the words $w_1$, $w_2$, $w_3$, $w_4$ in the places of $\pi$, $\varrho$, 
$\overline\pi$, $\overline\varrho$, respectively. 
Further, let $\overline r$ be the relation obtained by 
substituting $w_2$  in the relation 
$(\varrho\sigma)^n=(\varrho\sigma)^n\overline\tau$ 
into the place of $\varrho$. Then we obtain 
the following main result of this section.

\begin{corollary}\label{relcor}
(i) $\PT_n\wr\T_m=\left<\xi_1,\xi_2,\tau,\sigma,\overline \tau\mid \overline \R_P\cup\overline\R_T\cup\overline \R_1\cup\overline\R_2\cup\overline \R_3\right>$;

(ii) $\PT_{n\times m}=\left<\xi_1,\xi_2,\tau,\sigma,\overline \tau\mid \overline \R_P\cup\overline\R_T\cup\overline \R_1\cup\overline\R_2\cup\overline \R_3\cup\{\overline r\}\right>$.
\end{corollary}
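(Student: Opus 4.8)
The plan is to deduce Corollary~\ref{relcor} from Theorem~\ref{presth} by a Tietze-transformation argument. We already have presentations of $\PT_n\wr\T_m$ and $\PT_{n\times m}$ on the $7$-element generating set $\{\pi,\varrho,\tau,\sigma,\overline\pi,\overline\varrho,\overline\tau\}$, and we know from the preceding lemma that $\pi,\varrho,\overline\pi,\overline\varrho$ are expressible as the words $w_1,w_2,w_3,w_4$ in $\xi_1,\xi_2$, while conversely the definitions of $\xi_1,\xi_2$ at the start of Section~\ref{sectrank} express $\xi_1,\xi_2$ as words in $\pi,\varrho,\overline\pi,\overline\varrho$ (through the coordinate embeddings and the cyclic/transposition permutations, all of which lie in $\S_n\wr\S_m$). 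First I would add $\xi_1,\xi_2$ to the generating set together with the two defining relations $\xi_1=(\text{word in }\pi,\varrho,\overline\pi,\overline\varrho)$ and $\xi_2=(\text{word in }\pi,\varrho,\overline\pi,\overline\varrho)$; this is a Tietze transformation of type (T1)/(T2) and does not change the monoid. Then, using the lemma, I would add the four relations $\pi=w_1$, $\varrho=w_2$, $\overline\pi=w_3$, $\overline\varrho=w_4$; each of these is a consequence of the relations already present (since both sides represent the same element of $\PT_n\wr\T_m$, and we have a presentation), so adding them is again a legitimate Tietze transformation.

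Once those four relations are available, I would eliminate the generators $\pi,\varrho,\overline\pi,\overline\varrho$ one at a time via Tietze transformations of type (T4), each time substituting the corresponding word $w_i$ for every occurrence of the eliminated generator throughout the remaining relation set. Carrying this out on $\R_P\cup\R_T\cup\R_1\cup\R_2\cup\R_3$ produces exactly $\overline\R_P\cup\overline\R_T\cup\overline\R_1\cup\overline\R_2\cup\overline\R_3$ by the very definition of the barred relation sets given just before the corollary, while the two relations $\xi_1=(\cdots)$ and $\xi_2=(\cdots)$ that we introduced become redundant after the substitution (they turn into relations of the form $\xi_1=(\text{word in }\xi_1,\xi_2)$ that are consequences of $\overline\R_P\cup\overline\R_T$, essentially because they hold in $\S_n\wr\S_m=\langle\xi_1,\xi_2\rangle$, whose relations are contained in $\overline\R_P\cup\overline\R_T$); so they may be removed. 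This establishes part~(i). For part~(ii), the same sequence of Tietze transformations is applied to the presentation of $\PT_{n\times m}$ in Theorem~\ref{presth}(ii), the only extra relation $(\varrho\sigma)^n=(\varrho\sigma)^n\overline\tau$ being transformed, under the substitution $\varrho\mapsto w_2$, into precisely $\overline r$; hence $\PT_{n\times m}$ has the presentation claimed.

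The main technical point to be careful about—though it is routine rather than deep—is the verification that the substitutions genuinely transform $\R_1,\R_2,\R_3$ into $\overline\R_1,\overline\R_2,\overline\R_3$ with no loss of generating information: one must check that $\tau,\sigma,\overline\tau$ are untouched by the substitution and that every occurrence of $\pi,\varrho,\overline\pi,\overline\varrho$ in those relation sets is replaced consistently, which is exactly how the barred sets were defined. One also has to confirm that the introduced relations expressing $\xi_1,\xi_2$ in terms of the old generators are indeed consequences of the Theorem~\ref{presth} relations; this follows because Theorem~\ref{presth}(i) is a \emph{complete} presentation, so any true identity in $\PT_n\wr\T_m$ is derivable from it, and $\xi_i$ and its expansion are equal in $\PT_n\wr\T_m$. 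Since the details of these substitutions are mechanical, I would state the argument at the level above and omit the bookkeeping, exactly as the paper's style elsewhere (e.g. the preceding lemma) permits.
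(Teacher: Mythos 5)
Your proposal is correct and follows essentially the same route as the paper, which states Corollary~\ref{relcor} without proof as an immediate consequence of Theorem~\ref{presth} and the preceding lemma via exactly the substitution/Tietze-transformation argument you describe. The only point where you go beyond the paper is in justifying the removal of the auxiliary relations expressing $\xi_1,\xi_2$ in terms of $\pi,\varrho,\overline\pi,\overline\varrho$; your stated reason (that the relations of $\S_n\wr\S_m$ on $\{\xi_1,\xi_2\}$ are ``contained in'' $\overline\R_P\cup\overline\R_T$) is looser than ideal, since those sets are rewritten presentations of $\PT_n$ and $\T_m$ rather than of $\S_n\wr\S_m$ on $\{\xi_1,\xi_2\}$, but the paper itself passes over this point in silence and your overall argument is the intended one.
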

\section*{Acknowledgement}

Cical\`o gratefully acknowledges the support of FCT and PIDDAC, within the
project PTDC/MAT/69514/2006 of CAUL and the support of the Regione Sardegna,
within the project Master \& Back, PR-MAB-A2009-837. She also thanks the
Department of Mathematics of the University of Trento for its hospitality from
2011.
Fernandes  gratefully
acknowledges support of FCT and PIDDAC, within the projects ISFL-1-143 and
PTDC/MAT/69514/2006 of CAUL.
Schneider was supported by the FCT project PTDC/MAT/101993/2008.


\end{document}